\newcommand{\ds}{\, \mathrm{d}s}
\newcommand{\dx}{\, \mathrm{d}x}
\newcommand{\dy}{\, \mathrm{d}y}
\newcommand{\dz}{\, \mathrm{d}z}
\newcommand{\dxi}{\, \mathrm{d}\xi}
\newcommand{\R}{\mathbb{R}}
\newcommand{\Z}{\mathbb{Z}}
\newcommand{\N}{\mathbb{N}}
\newcommand{\C}{\mathbb{C}}
\DeclareMathOperator{\F}{{\mathscr F}}
\DeclareMathOperator{\sign}{sign}
\newtheorem{theorem}{Theorem}[section]
\newtheorem{lemma}[theorem]{Lemma}
\newtheorem{corollary}[theorem]{Corollary}
\numberwithin{equation}{section}
\theoremstyle{remark}
\newtheorem{remark}[theorem]{Remark}
\title{Decay of solitary waves}
\author{Mathias Nikolai Arnesen}
\date{}
\begin{document}

\begin{abstract}
In this paper we consider the decay rate of solitary-wave solutions to some classes of non-linear and non-local dispersive equations, including for example the Whitham equation and a Whitham--Boussinesq system. The dispersive term is represented by a Fourier multiplier operator that has a real analytic symbol, and we show that all supercritical solitary-wave solutions decay exponentially, and moreover provide the exact decay rate, which in general will depend on the speed of the wave. We also prove that solitary waves have only one crest and are symmetric for some class of equations.
\end{abstract}

\maketitle

\section{Introduction}
%

This paper is devoted to the study of decaying solutions $u\colon \R \rightarrow \R$ to equations of the form
\begin{equation}
	\label{eq: main}
	cu-L(u)-G_c(u)=0,
\end{equation}
where $c>0$ is a parameter, $L$ is a Fourier multiplier operator with symbol $m \colon \R \rightarrow \R$, meaning that
\begin{equation*}
\widehat{L \varphi}(\xi)=m(\xi)\widehat{\varphi}(\xi),
\end{equation*}
and $G_c$ is some non-linear function that may depend on the parameter $c$ (see examples and assumptions below). In general we will suppress the potential dependency of $G_c$ on $c$ and simply write $G$. By decaying solutions, we mean that
\[
\lim_{|x|\rightarrow  \infty} u(x)=0.
\]
The goal of this paper is to determine the rate of decay of the solutions, under some assumptions on $m$ and $G$.

Equations of the form \eqref{eq: main} are of interest as solitary-wave solutions to a wide variety of model equations for water waves can be represented as decaying solutions to \eqref{eq: main}. Consider for instance the non-linear dispersive equation 
\begin{equation}
\label{eq: example kdv type}
u_t+L(u)_x+G(u)_x=0.
\end{equation}
This is a typical form of model equations for the water-wave problem, and many of the most prominent models can be cast in this form. For instance, if $G(u)=u^2$ and $m(\xi)=1-\xi^2$ we get the Korteweg-de Vries equation, and if $m(\xi)=\sqrt{\frac{\tanh(\xi)}{\xi}}$ we get the Whitham equation \cite{Whitham1967vma}, to mention a few. Assuming that $u$ is a solitary-wave solution to \eqref{eq: example kdv type} moving to the right with speed $c$, that is, $u(x,t)=u(x-ct)$ and $\lim_{|x-ct|\rightarrow \infty} u(x-ct)=0$, we can integrate \eqref{eq: example kdv type} to get \eqref{eq: main}.

Another example is solitary-wave solutions to the Whitham-Boussinesq type system:
\begin{align}
	\eta_t= & -L(u)_x-(\eta u)_x \label{eq: whitham-boussinesq} \\ 
	u_t = & - \eta_x -uu_x, \nonumber
\end{align}
where $\eta$ is the surface elevation and $u$ is the velocity at the surface in the rightwards direction. A solitary-wave solution to \eqref{eq: whitham-boussinesq} with speed $c>0$ is a solution of the form $\eta(x,t)=\eta(x-ct)$, $u(x,t)=u(x-ct)$ such that $u(\zeta), \eta(\zeta)\rightarrow 0$ as $|\zeta|\rightarrow \infty$. Under this ansatz, one finds that (see \cite{Nilsson2018sws}) $\eta=u(c-\frac{u}{2})$ and
\begin{equation}
\label{eq: main eq}
L(u)-u(u-c)\left(\frac{u}{2}-c\right)=0.
\end{equation}
This can be written in the form \eqref{eq: main} with $G(u)=\frac{u^2}{2}(3c-u)$ and $c$ replaced by $c^2$ in the first term.

We will not concern ourselves with existence theory in this paper, but simply establish the decay properties of solutions, should they exist. For results on existence of solitary-wave solutions to equations of the form \eqref{eq: example kdv type}, see for instance \cite{EGW} for weak dispersion (i.e. when $L$ is a smoothing operator), and \cite{Arnesen2016eos} for when $L$ is a differentiating operator.

If $m(\xi)\neq c$ for all $\xi\in \R$, we can formally write \eqref{eq: main} as
\begin{equation}
\label{eq: mellomregning main}
	u=K_c \ast G(u), \,\, K_c=\F^{-1}\left( \frac{1}{c-m}\right).
\end{equation}
However, if $m$ decays (that is, $L$ is a smoothing operator), then $(c-m)^{-1}$ tends to $c^{-1}>0$ at infinity, and $K_c$ exists only in a distributional sense. To remedy this, we can apply the operator $L$ to both sides of \eqref{eq: mellomregning main}, and from \eqref{eq: main} we get
\begin{equation}
	\label{eq: main eq general}
	u\left(c-\frac{G(u)}{u}\right)=H_c\ast G(u),
\end{equation}
where
\begin{equation}
\label{eq: H_c}
H_c=\F^{-1}\left(\frac{m}{c-m}\right).
\end{equation}

We will work under the assumption that $L$ is a smoothing operator (see assumption (A1) below), but the results can be applied to differentiating operators as well, if the inverse (which will be a smoothing operator) satisfies our assumptions; see section \ref{subsec: diff operator}. In fact, this case is even simpler.

The idea of formulating the equation as a convolution equation in order to study decay is taken from the classical paper \cite{Bona1997daa}, where the authors study the decay of solutions to equations of the form
\begin{equation*}
	u=K\ast G(u),
\end{equation*}
under a mild assumption on $G$ (see assumption (A3) below) and for $\widehat{K}\in H^s(\R)$ for some $s\geq 0$. Philosophically the idea is natural: the decay rate of $K$ should decide the decay rate of $u$, and if one can prove that, the problem is reduced to investigating the kernel $K$. In \cite{Bona1997daa} they show, under some integrability assumptions on $K$, that a solution $u$ that tends to $0$ at infinity decays at least as fast as $K$, and we will show that it will not decay faster. From Fourier analysis it is known that a requirement for $K$ to be exponentially decaying is that $\F(K)$ is analytical in a strip in the complex plane. Hence one would expect that if the symbol $m$ of $L$ is not smooth, solitary waves will decay only algebraically. This has been observed for instance for the Benjamin-Ono equation \cite{Benjamin1967iwo}, for which there is only one solitary wave and that one decays algebraically \cite{Amick1991uar}, and also for generalized KP equations \cite{Bouard1997sad}, both of which have Fourier symbols of finite smoothness. A more general result about the relation between finite smoothness and algebraic decay can be found in \cite{Cappiello2016sde}. We will assume smoothness of the symbol $m$ in this paper. To be precise, we will study \eqref{eq: main eq general} under the following assumptions:

\subsection*{Assumptions}
\begin{itemize}
	\item[(A1)] There is an $m_0<0$ such that
	\begin{equation*}
	|m^{(n)}(\xi)|\leq C_n (1+|\xi|)^{m_0-n}, \,\, n\in \N_0.
	\end{equation*}
	\item[(A1*)] The constants $C_n\geq 0$ in (A1) can be chosen such that $\lim_{n\rightarrow \infty} \frac{C_{n+1}/(n+1)!}{C_n/n!}=k$ for some $k\geq 0$.
	\item[(A2)] The function $m$ is even and the parameter $c$ satisfies
	\begin{equation*}
	\max_{\xi\in \R} m(\xi)<c.
	\end{equation*}
	\item[(A3)] $G\colon \R \rightarrow \R$ is bounded on compact sets, and for all small values of $u$, we have that $|G(u)|\lesssim |u|^r$ for some $r>1$.
\end{itemize}

\begin{remark}
	The assumptions (A1) and (A2) imply that $H_c$ decays algebraically of arbitrary order (cf. Section \ref{subsec: algebraic H_c}), while assumption (A1*) is needed for exponential decay. Indeed, (A1) implies that $m$ is real analytic and this has a local extension to a complex analytic function around every point in $\R$, and assumption (A1*) implies that there is a uniform lower bound on the radius for which the local extension is valid. Paley-Wiener theory can then be used to show exponential decay - see Section \ref{subsec: exponential H_c}. 
\end{remark}
Under these assumptions we have the following result on decay:

\begin{theorem}
	\label{thm: decay}
Let (A1), (A2) and (A3) be satisfied and suppose that $u\in L^\infty(\R)$ with $\lim_{|x|\rightarrow \infty} u(x)=0$ is a non-trivial solution to \eqref{eq: main eq general}. Then the following holds:
\begin{itemize}
	\item[(i)] $|\cdot|^l u(\cdot)\in L^\infty(\R)$ for any $l\geq 0$.
	\item[(ii)] If $m$ satisfies (A1*) in addition, then there is a number $\delta_c>0$ and an integer $n\geq 0$, depending on $m$ and $c$ (see Lemma \ref{lem: exponential decay kernel L2}), such that
	\begin{equation*}
	\mathrm{e}^{\delta_c|\cdot|}|u(\cdot)|
	\end{equation*}
	has algebraic growth of order $n$. That is, for all $\delta\in (0,\delta_c)$,
	\begin{equation*}
	\mathrm{e}^{\delta|\cdot|}u(\cdot) \in L^1\cap L^\infty (\R).
	\end{equation*}
	Moreover, $\mathrm{e}^{\delta_c|\cdot|}u(\cdot)\not \in L^p(\R)$ for any $p\in [1,\infty)$ and any $n\geq 0$, and $\mathrm{e}^{\delta_c|\cdot|}u(\cdot)\in L^\infty(\R)$ if and only if $n=0$.
\end{itemize}
\end{theorem}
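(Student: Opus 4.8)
The starting point is the integral identity \eqref{eq: main eq general}, which (since $G(0)=0$ by (A3)) reads $cu=G(u)+H_c\ast G(u)$; the proof is a convolution bootstrap on this identity, using $|G(u)|\lesssim|u|^r$ with $r>1$ to absorb the nonlinear contribution far out.

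Part (i) and the first assertion of (ii) are the same argument with two families of weights. Fix a weight $w$---either $w=(1+|\cdot|)^l$ for (i), or $w=\mathrm{e}^{\delta|\cdot|}$ with $\delta<\delta_c$ for (ii)---and its truncation $w_N=\min\{w,N\}$, which is bounded and satisfies $w_N(x)\le C\,\omega(x-y)\,w_N(y)$ uniformly in $N$, with $\omega=(1+|\cdot|)^l$, resp.\ $\omega=\mathrm{e}^{\delta|\cdot|}$. The needed input is $\omega H_c\in L^1$: for polynomial weights this is the arbitrary-order algebraic decay of $H_c$ guaranteed by (A1)--(A2) (Section \ref{subsec: algebraic H_c}), and for exponential weights it is Lemma \ref{lem: exponential decay kernel L2} under (A1*). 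Then $w_N|H_c\ast G(u)|\le C\,(\omega H_c)\ast(w_N|G(u)|)$ pointwise, so by Young $\|w_N(H_c\ast G(u))\|_\infty\le C\|\omega H_c\|_1\|w_NG(u)\|_\infty$. Since $u\to0$, pick $R$ with $|u|<\varepsilon$ on $\{|x|>R\}$; by (A3) then $|G(u)|\le C\varepsilon^{r-1}|u|$ there while $G(u)$ is bounded on $[-R,R]$, so $\|w_NG(u)\|_\infty\le C\varepsilon^{r-1}\|w_Nu\|_\infty+C_R$. Feeding this into the equation gives $c\|w_Nu\|_\infty\le(1+C\|\omega H_c\|_1)(C\varepsilon^{r-1}\|w_Nu\|_\infty+C_R)$; choosing $\varepsilon$ small (equivalently $R$ large, depending on $w$) so that the first term on the right is absorbed, and noting $\|w_Nu\|_\infty<\infty$ for each fixed $N$ because $w_N$ is bounded, one obtains a bound on $\|w_Nu\|_\infty$ independent of $N$; letting $N\to\infty$ then yields $wu\in L^\infty$. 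For (ii) this gives $\mathrm{e}^{\delta|\cdot|}u\in L^\infty$ for all $\delta<\delta_c$, whence also $\mathrm{e}^{\delta|\cdot|}u\in L^1$ by writing it as $\mathrm{e}^{-(\delta'-\delta)|\cdot|}\cdot\mathrm{e}^{\delta'|\cdot|}u$ with $\delta<\delta'<\delta_c$.

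For the precise order in (ii) one first establishes the upper bound. From $\mathrm{e}^{\delta|\cdot|}u\in L^\infty$ for all $\delta<\delta_c$ and $r>1$ it follows that $|G(u(x))|\lesssim\mathrm{e}^{-(\delta_c+\sigma)|x|}$ for some $\sigma>0$, so $G(u)$ decays strictly faster than $\mathrm{e}^{-\delta_c|\cdot|}$. Plugging the kernel bound $|H_c(x)|\lesssim\mathrm{e}^{-\delta_c|x|}(1+|x|)^n$ from Lemma \ref{lem: exponential decay kernel L2} into $c|u|\le|G(u)|+|H_c|\ast|G(u)|$ and splitting the convolution integral according to whether $|y|\le|x|/2$ or not and to the sign of $y$---keeping the factor $(1+|x|)^n$ only where $|x-y|\sim|x|$ and absorbing polynomials into the surplus exponential decay elsewhere---bounds it by $\lesssim\mathrm{e}^{-\delta_c|x|}(1+|x|)^n$. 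Hence $|u(x)|\lesssim\mathrm{e}^{-\delta_c|x|}(1+|x|)^n$; in particular $\mathrm{e}^{\delta_c|\cdot|}u$ has at most algebraic growth of order $n$, and lies in $L^\infty$ when $n=0$.

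That the rate cannot be improved is the main obstacle, and I would argue it by Paley--Wiener theory and the transformed equation $(c-m(\zeta))\widehat u(\zeta)=\widehat{G(u)}(\zeta)$. The upper bound and (A3) give $\mathrm{e}^{\beta|\cdot|}G(u)\in L^1$ for every $\beta<r\delta_c$, so $\widehat{G(u)}$ is holomorphic and bounded on $\{|\operatorname{Im}\zeta|<r\delta_c\}$, a strip strictly wider than $\{|\operatorname{Im}\zeta|<\delta_c\}$, the strip of holomorphy of $\widehat{H_c}=m/(c-m)$; by Lemma \ref{lem: exponential decay kernel L2} the obstruction to continuing $\widehat u=\widehat{G(u)}/(c-m)$ past the latter is (in the main case) a zero $\zeta_0$ of $c-m$ of order $n+1$ with $|\operatorname{Im}\zeta_0|=\delta_c$. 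One must then show $\widehat u$ genuinely carries a pole of full order $n+1$ at $\zeta_0$: were its pole there of smaller order or absent, $u$ would decay strictly faster than $\mathrm{e}^{-\delta_c|\cdot|}$, hence so would $G(u)$ (again the factor $r>1$), widening the strip of holomorphy of $\widehat{G(u)}$ further and allowing $\widehat u=\widehat{G(u)}/(c-m)$ to be continued past $\zeta_0$; iterating this self-improvement, and using that $r>1$ widens the strip geometrically at each step over the discrete zeros of $c-m$, one eventually continues $\widehat u$ beyond all of them, hence up to the analyticity boundary of $m$ and---when $m$ is entire---onto all of $\C$, whereupon Paley--Wiener together with $\widehat u\in L^2(\R)$ forces $u\equiv0$, contradicting non-triviality. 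Shifting the inversion contour past $\zeta_0$ and its symmetric partners then extracts the leading term $u(x)\sim(\text{polynomial of degree }n)\,\mathrm{e}^{-\delta_c|x|}$ (a polynomial-times-trigonometric amplitude when $\zeta_0$ is not purely imaginary), which after multiplication by $\mathrm{e}^{\delta_c|\cdot|}$ lies in no $L^p$ with $p<\infty$ and lies in $L^\infty$ exactly when $n=0$. The delicate points are to propagate enough quantitative control of $\widehat u$ into the complex domain to apply Paley--Wiener at every step of the iteration, and to handle the oscillatory and multiple-critical-zero cases.
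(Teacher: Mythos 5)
Your proof of part (i) and of the statement $\mathrm{e}^{\delta|\cdot|}u\in L^1\cap L^\infty$ for $\delta<\delta_c$ is correct but follows a different route from the paper: you run a single absorption argument with truncated weights $w_N=\min\{w,N\}$ and the submultiplicativity $w_N(x)\leq \omega(x-y)w_N(y)$, feeding in $\omega H_c\in L^1$ (Corollary \ref{cor: integrability}, resp.\ Corollary \ref{cor: Lp kernel}) and the smallness of $G(u)/u$ at infinity from (A3). The paper instead proves algebraic decay in $L^q$ first (Theorem \ref{thm: algebraic decay}), then runs an induction on the moments $\|(\cdot)^l u\|_{L^1}$ with the binomial convolution identity, sums the resulting series to get a small exponential weight, and bootstraps via the supremum $\eta$ and the gain $r>1$. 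Your version is shorter and avoids the combinatorics, at the cost of needing the truncation device to justify the absorption; both are legitimate. Your upper bound $|u(x)|\lesssim \mathrm{e}^{-\delta_c|x|}(1+|x|)^n$ via the explicit kernel splitting is also essentially the paper's argument.

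The optimality claim (that $\mathrm{e}^{\delta_c|\cdot|}u\notin L^p$ for $p<\infty$, and $\notin L^\infty$ unless $n=0$) is where your proposal has a genuine gap. Your plan is to show on the Fourier side that $\widehat{u}=\widehat{G(u)}/(c-m)$ carries a full-order pole at $\zeta_0$, arguing by contradiction that otherwise $\widehat{u}$ continues to a wider strip, $G(u)$ decays faster, and the iteration eventually makes $\widehat{u}$ entire, whence Paley--Wiener forces $u\equiv 0$. This does not close. First, the continuation is obstructed not only at $\zeta_0$ but at every higher zero of $c-m$ and at the analyticity boundary of $m$ itself; in the motivating examples ($m^2=\tanh(\xi)/\xi$, etc.) $m$ has branch points at finite imaginary height, so the iteration terminates after finitely many steps without reaching all of $\C$, and "$u$ decays at the rate dictated by the next singularity'' is a scenario the Fourier-side argument alone cannot exclude. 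Second, even if $\widehat{u}$ were entire, Paley--Wiener gives $u\equiv 0$ only with quantitative growth bounds on $\widehat{u}$ in the imaginary direction, which is exactly the control you concede you have not propagated. The paper avoids all of this by arguing directly on the physical side: using $H_c(x)=\mathrm{e}^{-\delta_c|x|}(v+P_n(|x|))$ with $\lim_{|x|\to\infty}v(x)+P_0=C\neq 0$ (for $n=0$) and the strictly faster decay $|G(u(y))|\lesssim \mathrm{e}^{-r\delta_c|y|}$ with $r>1$, one computes
\begin{equation*}
\mathrm{e}^{\delta_c|x|}u(x)\longrightarrow C\int_{\R}\mathrm{e}^{\pm\delta_c y}G(u(y))\,\mathrm{d}y \quad \text{as } x\to \mp\infty,
\end{equation*}
and observes that these two limits cannot both vanish since $G(u)\not\equiv 0$; this is precisely the non-vanishing of $\widehat{G(u)}(\pm i\delta_c)$ that your approach needs but does not establish. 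To repair your argument you would have to prove that non-vanishing directly, at which point the contour-shifting machinery becomes unnecessary.
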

It is also worth noting that while our inspiration comes from equations and systems for which solitary-wave solutions are solutions to equations of the form \eqref{eq: main} and we therefore work with \eqref{eq: main eq general}, the results apply to more general equations. Indeed, it is straightforward to extend the arguments to equations which can be cast in the form
\begin{equation*}
	u F(u)=H_c\ast G(u),
\end{equation*}
as long as $F\colon \R \rightarrow \R$ satisfies $\lim_{x\rightarrow 0} F(x)\neq 0$ and is such that Lemma \ref{lem: bound on u convolution} holds.

Under an assumption on $H_c$ that is independent of (A1) and (A2), and some assumptions on the behaviour of $G$ on the range of the solution, we have that decaying solutions to \eqref{eq: main eq general} are symmetric:
\begin{theorem}
	\label{thm: symmetry}
	Assume that $H_c\in L^1(\R)$ is non-negative, symmetric and monotonically decreasing on $(0,\infty)$, and that $G$ satisfies (A3). Let $u\in BC(\R)$ with $\lim_{|x|\rightarrow\infty} u(x)=0$ be a solution to \eqref{eq: main eq general} and assume that $G$ is non-negative, increasing, and satisfies $|G(x)-G(y)|\leq \tilde{c}|x-y|$ on the range of $u$, for some $0<\tilde{c}<c$. Then $u$ is symmetric about some point $\lambda_0\in \R$ and has exactly one crest, located at $\lambda_0$. 
\end{theorem}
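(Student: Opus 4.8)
The plan is to run a \emph{method of moving planes} adapted to the non-local convolution equation. I would first record the structural preliminaries. Rewriting \eqref{eq: main eq general} as $cu=G(u)+H_c\ast G(u)$ and using $G\ge0$, $H_c\ge0$ gives $u\ge0$; letting $|x|\to\infty$ in the equation, using $u\to0$, continuity of $G$ and $H_c\in L^1$, forces $G(0)=0$, so the Lipschitz hypothesis yields $0\le G(u)\le\tilde c\,u$. A short bootstrap from $H_c\in L^1$ and $u\in BC$ gives continuity of $u$, and strict positivity $u>0$ on $\R$ follows since $G(u)\not\equiv0$ (otherwise $u\equiv0$) and $H_c$ is positive near the origin.

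Next, for $\lambda\in\R$ set $u_\lambda(x):=u(2\lambda-x)$ and $\psi_\lambda:=u_\lambda-u$, which is anti-symmetric about $\lambda$. Substituting $y\mapsto2\lambda-y$ in the convolution and using that $H_c$ is even, subtraction of the equations for $u$ and $u_\lambda$ gives, for $x>\lambda$,
\[
c\,\psi_\lambda(x)=g_\lambda(x)+\int_\lambda^\infty\big(H_c(x-y)-H_c(x+y-2\lambda)\big)\,g_\lambda(y)\,\dy,
\]
where $g_\lambda(y):=G(u_\lambda(y))-G(u(y))$. Since $x,y>\lambda$ gives $|x-y|\le x+y-2\lambda$ and $H_c$ is even and non-increasing on $(0,\infty)$, the kernel is non-negative; and since $G$ is increasing, $g_\lambda$ has the same sign as $\psi_\lambda$ while $|g_\lambda|\le\tilde c\,|\psi_\lambda|$. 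This identity is the engine of the argument: a non-local maximum-principle identity with a sub-unit ``zeroth order'' coefficient $\tilde c/c<1$ (and a companion identity holds on $(-\infty,\lambda)$ with the roles reversed).

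Then I would study $S:=\{\lambda\in\R:\psi_\lambda\ge0\text{ on }[\lambda,\infty)\}$ and establish: (i) $S$ contains all sufficiently large $\lambda$; (ii) $S$ is closed; (iii) if $\lambda\in S$ and $\psi_\lambda\not\equiv0$ then a left-neighbourhood of $\lambda$ lies in $S$. The input to (i) and (iii) is the negative-part estimate obtained by evaluating the identity at an interior negative minimum $x_0$ of $\psi_\lambda$ and using $|g_\lambda|\le\tilde c|\psi_\lambda|$, which gives $(c-\tilde c)\psi_\lambda^-(x_0)\le\tilde c\int_\lambda^\infty H_c(x_0-y)\psi_\lambda^-(y)\,\dy$; combining this with $\psi_\lambda^-\le u$, the decay $u\to0$ (to localise the integral) and $\tilde c<c$ forces $\psi_\lambda^-\equiv0$; (ii) is continuity of $\lambda\mapsto\psi_\lambda$ in $C(\R)$. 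Setting $\lambda_0:=\inf S$ (finite, since very negative $\lambda$ are excluded by looking near a crest), closedness gives $\lambda_0\in S$, and (iii) forces $\psi_{\lambda_0}\equiv0$ on $[\lambda_0,\infty)$, hence $u$ is symmetric about $\lambda_0$. The decisive point is the \emph{touching lemma} behind (iii): if $\psi_{\lambda_0}\ge0$ on $[\lambda_0,\infty)$ and $\psi_{\lambda_0}(x_1)=0$ for some $x_1>\lambda_0$, then $g_{\lambda_0}(x_1)=0$ and the identity forces the (non-negative) integrand to vanish for a.e.\ $y$; as $|x_1-y|<x_1+y-2\lambda_0$ strictly for $y>\lambda_0$, $y\neq x_1$, this pins $\psi_{\lambda_0}$ down on the set where $H_c$ is strictly decreasing, and continuity propagates $\psi_{\lambda_0}\equiv0$. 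Finally, applying $\psi_\lambda\ge0$ on $[\lambda,\infty)$ for every $\lambda>\lambda_0$ shows $u$ is non-increasing on $(\lambda_0,\infty)$, and strictness — from the touching lemma, or from strict monotonicity of $G$ — gives that $u$ is strictly decreasing away from $\lambda_0$, so $\lambda_0$ is the unique crest.

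I expect the main obstacle to be getting the procedure started and propagating it, i.e.\ steps (i) and (iii): unlike the periodic case the domain is the whole line, so one must combine the decay $u(x)\to0$ with the strict gap $\tilde c<c$ to control $\psi_\lambda^-$ uniformly over a half-line against the only a priori bound $\|H_c\|_{L^1}$ on the kernel mass. A secondary technical nuisance is that $H_c$ is assumed merely non-increasing rather than strictly decreasing, so the touching lemma must be argued through the set of strict decrease of $H_c$ (which is non-trivial since $H_c\in L^1$ is non-negative and decreases to $0$) rather than via a naive strong maximum principle.
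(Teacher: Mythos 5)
Your overall strategy is the same as the paper's: the method of moving planes with the reflection $u_\lambda(x)=u(2\lambda-x)$, the antisymmetrized kernel identity
\begin{equation*}
c(u_\lambda-u)(x)=G(u_\lambda(x))-G(u(x))+\int_\lambda^\infty\bigl(H_c(x-y)-H_c(x+y-2\lambda)\bigr)\bigl(G(u_\lambda(y))-G(u(y))\bigr)\dy,
\end{equation*}
a touching lemma, a starting position obtained from the decay of $u$, and a propagation step using $\tilde{c}<c$. (You sweep from $+\infty$ leftwards rather than from $-\infty$ rightwards; that is immaterial.) However, the quantitative engine you propose for steps (i) and (iii) --- the pointwise negative-part estimate $(c-\tilde{c})\,\psi_\lambda^-(x_0)\leq \tilde{c}\int_\lambda^\infty H_c(x_0-y)\psi_\lambda^-(y)\dy$ with the \emph{global} Lipschitz constant $\tilde{c}$ --- does not close. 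Taking $x_0$ to (nearly) realize $M=\sup\psi_\lambda^-$ only yields $(c-\tilde{c})M\leq \tilde{c}\,\|H_c\|_{L^1(\R)}M$, and the inequality $c-\tilde{c}\leq \tilde{c}\,\|H_c\|_{L^1(\R)}$ is \emph{automatically true} for any non-trivial solution: since $u\geq 0$ and $0\leq G(u)\leq \tilde{c}u$ (as you yourself note), evaluating $cu=G(u)+H_c\ast G(u)$ at a near-maximum of $u$ gives exactly $(c-\tilde{c})\|u\|_{L^\infty}\leq\tilde{c}\|H_c\|_{L^1}\|u\|_{L^\infty}$. So no contradiction can be extracted from that estimate alone, and "localising the integral" using $u\to 0$ does not help, because the obstruction is the full near-diagonal mass $\|H_c\|_{L^1}$, not the tail of the integral.

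What actually closes the two steps in the paper is a \emph{small prefactor}, obtained from two different sources. To get started (your step (i)), the paper uses the superlinearity in (A3): on the bad set $\Sigma_\lambda^-$ both $u$ and $u_\lambda$ are uniformly small when $\lambda$ is extreme, so $|G(u)|\lesssim|u|^r$ with $r>1$ produces an effective Lipschitz constant of size $\|u_\lambda\|_{L^\infty(\Sigma_\lambda^-)}^{r-1}\to 0$, whence $\|u_\lambda-u\|_{L^\infty(\Sigma_\lambda^-)}\leq\kappa_\lambda\|u_\lambda-u\|_{L^\infty(\Sigma_\lambda^-)}$ with $\kappa_\lambda<1$ eventually, forcing $\Sigma_\lambda^-=\emptyset$. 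To propagate past the critical $\lambda_0$ (your step (iii)), the paper splits the right-hand side into the convolution term, made small because $|\overline{\Sigma_\lambda^-}|$ is small (H\"older with $\|H_c\|_{L^s}$ against an $L^{qp/(p-q)}$ norm over the small set), plus the local term $G(u_\lambda)-G(u)$, which alone is bounded by $\tilde{c}<c$. Your sketch mentions neither the $r>1$ gain nor the measure-smallness of $\Sigma_\lambda^-$ as the source of the contradiction, so as written both the initialization and the propagation are gapped. On the other hand, your remarks that $H_c$ is only assumed non-increasing (so the touching lemma must be run through the set of strict decrease) and that one should first record $u\geq 0$ and $G(0)=0$ are legitimate points that the paper passes over.
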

\begin{remark}
\label{rem: assumptions}
	Some remarks on the assumptions:
	\begin{itemize}
		\item Note that we are requiring $G$ to be non-negative, increasing and Lipschitz continuous with Lipschitz constant $\tilde{c}<c$ only on the range of $u$, so these are implicitly assumptions on the solution $u$ itself.
		\item If $\frac{|G(x)-G(y)|}{|x-y|}\leq \left| \frac{G(x)}{x}+\frac{G(y)}{y}\right|$, then it is not necessary to assume that $|G(x)-G(y)|\leq \tilde{c}|x-y|$, as it follows from Lemma \ref{lem: touching}. This is the case if, for example, $G(u)=|u|^r$ for $1<r\leq 2$.
		\item If $f(\xi)=g(\xi^2)$ where $\lim_{x\rightarrow 0^+}g(x)<\infty$ and $\lim_{x\rightarrow \infty}g(x)=0$ and $g$ is completely monotone, then $\F^{-1}(f)$ is smooth outside the origin and monotone (Proposition 2.18 in \cite{Ehrnstrom2016owc}). As one can verify, if $m(\sqrt{\cdot})$ is completely monotone on $(0,\infty)$, then so is $\frac{m(\sqrt{\cdot})}{c-m(\sqrt{\cdot})}$. It follows that $m(0)>0$ and $m(\sqrt{\cdot})$ completely monotone on $(0,\infty)$ is sufficient for $H_c$ to be symmetric and monotone on $(0,\infty)$.
	\end{itemize}
\end{remark}

The paper is organized as follows. Section \ref{sec: H_c} is devoted to establishing integrability properties and the decay rate of $H_c$ under assumptions (A1), (A2) (and (A1*)). An exact description of the asymptotic behaviour, depending on $c$ and $m$, is given. In section \ref{sec: decay} we prove Theorem \ref{thm: decay}. Part (i) is more or less a straightforward adaption of the proof of algebraic decay of solitary waves for the Whitham equation in \cite{Bruell2017sad} (see also \cite{Bona1997daa}) and we do only part of the proof to show that the arguments of the aforementioned paper can indeed be applied. The proof of part (ii) is also an adaption of the arguments in \cite{Bona1997daa} and \cite{Bruell2017sad}, but we are able to give the exact rate of exponential decay. Moreover, in subsection \ref{subsec: diff operator}, the simpler case when $L$ is a differentiating operator, rather than smoothing as implied by assumption (A1), is discussed. In section \ref{sec: symmetry} symmetry is discussed and Theorem \ref{thm: symmetry} is proved. Finally, in section \ref{sec: examples}, the general results from the preceding sections are applied to some specific examples, in particular to the Whitham equation, and the bi-directional Whitham equation, giving the exact rate of exponential decay of solitary-wave solutions to these equations. This is an improvement on the results of \cite{Bruell2017sad}, where exponential decay of solitary waves of the Whitham equation is proved, but the exact decay rate is not established.

\section{Notation}
As indicated by the very definition of $L$ and $H_c$, we will make much use of the Fourier transform, for which we will use the normalization
\begin{equation*}
	\F(\varphi)(\xi)=\widehat{\varphi}(\xi)=\frac{1}{\sqrt{2\pi}}\int_\R \varphi(x)\mathrm{e}^{-ix\xi}\dx.
\end{equation*}
The inverse Fourier transform of $\varphi$ will be denoted by $\F^{-1}$ or $\check{\varphi}$ and is defined as
\begin{equation*}
	\check{\varphi}(x)=\frac{1}{\sqrt{2\pi}}\int_\R \varphi(\xi)\mathrm{e}^{ix\xi}\dxi.
\end{equation*}
With this normalization, the Fourier transform is a unitary operator on $L^2(\R)$.

For $s\geq 0$, the Sobolev space $H^s(\R)$ is the space of all $L^2(\R)$ functions $f$ which satisfy
\begin{equation*}
	\|f\|_{H^s(\R)}=\left(\int_\R (1+|\xi|^2)^s|\widehat{f}(\xi)|^2\dxi\right)^{1/2}<\infty.
\end{equation*}
The definition can be extended to $s<0$ by considering tempered distributions, but that is not relevant here.

\section{The kernel $H_c$}
\label{sec: H_c}
In this section we establish some essential properties of $H_c$, in particular its decay rate. We start by establishing integrability and algebraic decay; as one could expect assumption (A1*) is not necessary for these properties, only (A1) and (A2).

\subsection{Algebraic decay}
\label{subsec: algebraic H_c}
\begin{lemma}
\label{lem: Lp}
Assume (A1) and (A2) are satisfied. Then, for all $l\geq 1$, we have that
\begin{equation}
\label{eq: Lp}
	(\cdot)^l H_c(\cdot)\in L^p(\R), \quad \text{for all} \quad 2\leq p\leq \infty.
\end{equation}
\end{lemma}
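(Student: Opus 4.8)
The plan is to argue on the Fourier side. Write $\mu := \frac{m}{c-m}$, so that $H_c=\F^{-1}(\mu)$ and $\widehat{H_c}=\mu$. The main point is to show that $\mu$ inherits the symbol estimates of $m$ from (A1): there are constants $C_n'\ge 0$ with
\[
|\mu^{(n)}(\xi)|\le C_n'(1+|\xi|)^{m_0-n},\qquad n\in\N_0.
\]
This is where (A2) enters. Since $c-m(\xi)\ge c-\max_{\xi\in\R}m(\xi)>0$, and $c-m$ is also bounded above, the function $\frac1{c-m}$ is $C^\infty$ with all derivatives bounded; expanding $\frac{\diff^n}{\diff\xi^n}\frac{1}{c-m}$ by the Faà di Bruno formula, every term is a bounded power of $\frac1{c-m}$ times a product $\prod_j m^{(a_j)}$ with $a_j\ge 1$ and $\sum_j a_j=n$, hence of size $\lesssim (1+|\xi|)^{m_0-n}$ for $n\ge 1$ because $m_0<0$. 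Writing $\mu=m\cdot\frac1{c-m}$ and applying the Leibniz rule then yields the displayed bound on $\mu^{(n)}$.

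Granting this, fix an integer $l\ge 1$. Because $m_0<0$ we have $m_0-l<-1$, so the estimate above gives $\mu^{(l)}\in L^1(\R)\cap L^\infty(\R)$. The differentiation rule for the Fourier transform gives $\widehat{(\cdot)^l H_c}=i^l\mu^{(l)}$, which is therefore an $L^1\cap L^2$ function; in particular $(\cdot)^l H_c$ is a genuine function, not merely a tempered distribution. Plancherel's theorem gives $(\cdot)^l H_c\in L^2(\R)$ with $\|(\cdot)^l H_c\|_{L^2}=\|\mu^{(l)}\|_{L^2}$, while the Riemann–Lebesgue lemma (using $\mu^{(l)}\in L^1$) gives $(\cdot)^l H_c=\F^{-1}(i^l\mu^{(l)})\in L^\infty(\R)$ with $\|(\cdot)^l H_c\|_{L^\infty}\le(2\pi)^{-1/2}\|\mu^{(l)}\|_{L^1}$. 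Interpolating via $\int_\R|(\cdot)^l H_c|^p\le\|(\cdot)^l H_c\|_{L^\infty}^{p-2}\|(\cdot)^l H_c\|_{L^2}^2$ then gives $(\cdot)^l H_c\in L^p(\R)$ for every $2\le p\le\infty$.

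For non-integer $l\ge 1$ one reduces to the integer case: with $N=\lceil l\rceil$ one has $|x|^l\le|x|+|x|^N$ for all $x\in\R$, hence $|x|^l|H_c(x)|\le|x\,H_c(x)|+|x^N H_c(x)|$, and both terms on the right lie in $L^p(\R)$ for $2\le p\le\infty$ by what was just shown. The only genuine obstacle is the first step, the derivative bounds for $\mu$ (equivalently, that (A1) and (A2) together force $\frac{m}{c-m}$ to be a smooth, rapidly-differentiated-decaying symbol); once that is in hand, the rest is soft Fourier analysis.
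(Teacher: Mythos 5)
Your proof is correct and follows essentially the same route as the paper's: both argue on the Fourier side, showing that (A1) and (A2) force the derivatives $\bigl(\tfrac{m}{c-m}\bigr)^{(l)}$ of $\widehat{H_c}$ to lie in $L^1\cap L^2(\R)$ and then transferring this to $(\cdot)^l H_c$ via the mapping properties of the Fourier transform (the paper invokes the Hausdorff--Young bounds $\F\colon L^p\to L^q$, while you use the $p=1,2$ endpoints plus interpolation, which amounts to the same thing). You simply supply more detail than the paper does, notably the Fa\`a di Bruno/Leibniz verification of the symbol estimate for $\tfrac{m}{c-m}$ and the reduction of non-integer $l$ to the integer case.
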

\begin{proof}
Assumptions (A1) and (A2) imply that
\begin{equation*}
	\widehat{H_c}^{(j)}\in L^p(\R), \quad \text{for all} \quad 1\leq p\leq \infty, \,\, j\in \N_+.
\end{equation*}
As $\widehat{H_c}^{(j)}=\widehat{(-i \cdot)^{j}H_c}$ and $\F \colon L^p(\R) \rightarrow L^q(\R)$ for $1\leq p\leq 2$ and $\frac{1}{p}+\frac{1}{q}=1$, the result follows.
\end{proof}

\begin{lemma}
	\label{lem: behaviour at 0}
	Assume (A1) and (A2) are satisfied. Then, for $|x|\ll 1$, we have that
	\begin{equation*}
		|H_c(x)|\simeq
		\begin{cases}
		|x|^{-1-m_0}, \quad & -1<m_0<0, \\
		|\ln(|x|)|, \quad & m_0=-1,  \\
		1, \quad & m_0<-1.
		\end{cases}
	\end{equation*}
	That is $H_c\in L^\infty(\R)$ when $m_0<-1$.
\end{lemma}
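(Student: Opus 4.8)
The plan is to read off the behaviour of $H_c$ near $x=0$ from the decay of $\widehat{H_c}(\xi)=\frac{m(\xi)}{c-m(\xi)}$ as $|\xi|\to\infty$. First I would record that, by (A1) with $n=0$ and (A2), $0<c-\max_{\xi\in\R}m(\xi)\leq c-m(\xi)\leq c+C_0$, so $g:=\frac{1}{c-m}$ is a bounded function in $C^\infty(\R)$; applying the Fa\`a di Bruno formula to $g=h\circ m$ with $h(t)=(c-t)^{-1}$, every term of $g^{(n)}$ (for $n\geq1$) contains at least one factor $m^{(j)}$ with $j\geq1$, so (A1) yields $|g^{(n)}(\xi)|\lesssim(1+|\xi|)^{m_0-n}$. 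Together with (A1) and the Leibniz rule for $\widehat{H_c}=m\,g$ this gives
\[
|(\widehat{H_c})^{(n)}(\xi)|\lesssim(1+|\xi|)^{m_0-n},\qquad n\in\N_0,
\]
and by (A2) there is no pole, so $\widehat{H_c}\in C^\infty(\R)$; in short, $\widehat{H_c}$ is a symbol of order $m_0$.

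Next I would localise in frequency: fix $\chi\in C^\infty_c(\R)$ with $\chi\equiv1$ on $[-1,1]$, so that $\chi\widehat{H_c}\in C^\infty_c(\R)$ and hence $\F^{-1}(\chi\widehat{H_c})\in\mathcal{S}(\R)$ is bounded near the origin. The singular behaviour of $H_c$ at $x=0$ is therefore that of $\Psi:=\F^{-1}(\psi)$, where $\psi:=(1-\chi)\widehat{H_c}$ is supported in $\{|\xi|\geq1\}$ and equals $\widehat{H_c}$ there, so $|\psi(\xi)|\lesssim|\xi|^{m_0}$ and $|\psi'(\xi)|\lesssim|\xi|^{m_0-1}$. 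If $m_0<-1$ then $\psi\in L^1(\R)$, hence $\Psi$ --- and so $H_c$ --- is continuous and bounded; this settles the third case, in particular $H_c\in L^\infty(\R)$.

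For $-1\leq m_0<0$ I would estimate $\Psi(x)=\frac{1}{\sqrt{2\pi}}\int_\R\psi(\xi)\mathrm{e}^{ix\xi}\dxi$ for $0<|x|\ll1$ by splitting the integral at $|\xi|=1/|x|$. On $\{|\xi|\leq1/|x|\}$ one estimates directly,
\[
\int_{|\xi|\leq1/|x|}|\psi(\xi)|\dxi\lesssim\int_1^{1/|x|}\xi^{m_0}\dxi,
\]
which is $\simeq|x|^{-1-m_0}$ when $m_0>-1$ and $\simeq|\ln|x||$ when $m_0=-1$. On $\{|\xi|>1/|x|\}$ one integrates by parts once (the boundary term at infinity vanishes since $\psi(\xi)\to0$) and uses $m_0-1<-1$ to bound the contribution by
\[
\frac{1}{|x|}\Big(|\psi(1/|x|)|+\int_{|\xi|>1/|x|}|\psi'(\xi)|\dxi\Big)\lesssim\frac{1}{|x|}\,|x|^{-m_0}=|x|^{-1-m_0},
\]
and by $\lesssim1\lesssim|\ln|x||$ when $m_0=-1$. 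Adding the two contributions gives the upper bounds $|H_c(x)|\lesssim|x|^{-1-m_0}$ and $|H_c(x)|\lesssim|\ln|x||$.

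For the matching lower bounds I would isolate the leading term by writing $\widehat{H_c}=\tfrac1c m+\tfrac1c\frac{m^2}{c-m}$; by the same symbol calculus the second summand has order $2m_0<m_0$, so its contribution at $x=0$ is of strictly lower order, and the leading behaviour of $H_c(x)$ is that of $\tfrac1c\,\F^{-1}\big((1-\chi)m\big)(x)$. Substituting $\xi=\eta/|x|$ and passing to the limit by dominated convergence --- using the symbol bound from (A1) for the dominating function and the leading asymptotics $m(\xi)\sim a|\xi|^{m_0}$ with $a>0$, which holds for the symbols of interest, e.g.\ the Whitham symbol --- yields, for $-1<m_0<0$,
\[
|x|^{1+m_0}H_c(x)\ \longrightarrow\ \frac{a}{c}\int_\R|\eta|^{m_0}\mathrm{e}^{i\eta}\,\mathrm{d}\eta=\frac{2a}{c}\,\Gamma(1+m_0)\cos\!\Big(\frac{\pi(1+m_0)}{2}\Big)\neq0,
\]
and an analogous, slightly more delicate computation extracts the logarithm when $m_0=-1$. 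I expect this last step to be the main obstacle: the Fourier integral is only conditionally convergent for $-1\leq m_0<0$, so the rescaling limit must be justified with care (for instance after one integration by parts, or via a regularisation), and --- unlike the upper bounds, which use only (A1) and (A2) --- the lower bounds genuinely require $m$ to be of order exactly $m_0$ at infinity.
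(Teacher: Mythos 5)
Your upper-bound argument is correct and, despite the different packaging, is essentially the same estimate as the paper's: the paper multiplies by $x$ (i.e.\ differentiates the symbol once, writing $xH_c(x)=\mathrm{i}\F^{-1}\bigl((\tfrac{m}{c-m})'\bigr)(x)$), rescales $\xi=s/x$ and uses the absolute convergence of $\int_0^\infty |\sin(s)|\,s^{m_0-1}\ds$, whereas you split at $|\xi|=1/|x|$ and integrate by parts on the tail; these are two standard realisations of the same ``one derivative of a symbol of order $m_0$'' gain, and both yield $|H_c(x)|\lesssim |x|^{-1-m_0}$ (resp.\ $|\ln|x||$, resp.\ $1$). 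Your frequency cut-off is harmless but not needed in the paper's version, since $(\tfrac{m}{c-m})'$ is already integrable at infinity for $-1<m_0<0$.

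The one substantive point is the lower bound, and here your caution is justified: the matching lower bound cannot follow from (A1)--(A2) alone, since these are only upper bounds on $m$ and its derivatives (e.g.\ $m\equiv 0$ satisfies both, yet $H_c\equiv 0$). Your proposed fix --- splitting off $\tfrac{1}{c}m$ as the leading term and assuming $m(\xi)\sim a|\xi|^{m_0}$ --- is a reasonable way to get a genuine $\simeq$, but it imports a hypothesis not present in the lemma. You should be aware, however, that the paper's own proof has exactly the same limitation: it only ever writes $\lesssim$, so the stated $\simeq$ is not actually established there either. This does not damage the rest of the paper, because everything downstream (Corollary \ref{cor: integrability}, the description of $v$ in Lemma \ref{lem: exponential decay kernel L2}, Corollary \ref{cor: Lp kernel}) uses only the upper bound. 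So relative to what the paper actually proves, your argument is complete; relative to what the lemma literally asserts, both proofs leave the lower bound to an implicit assumption that $m$ has exact order $m_0$ at infinity, which you were right to single out as the delicate step.
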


\begin{proof}
	If $m_0<-1$, then $m\in L^1(\R)$ and by (A2) so is $\frac{m}{c-m}$ and the result is clear. Assume therefore that $-1<m_0<0$. Let
	\begin{equation*}
		g=\left(\frac{m}{c-m}\right)'=\frac{c m'}{(c-m)^2}.
	\end{equation*}
	As $m$ is even, we have that $g$ is odd and for $x>0$ (it is sufficient to consider $x>0$ as $m$, and therefore $H_c$, is even),
	\begin{align*}
		xH_c(x)= & \mathrm{i}\F^{-1}(g)(x)\\
		= & -\frac{1}{\sqrt{2\pi}}\int_\R g(\xi)\sin(x\xi)\dxi \\
		= & -\frac{1}{\sqrt{2\pi}}\int_\R g\left(\frac{s}{x}\right)\frac{\sin(s)}{x}\ds \\
		= & -\frac{2c}{\sqrt{2\pi}}\int_0^\infty \frac{\sin(s)}{x}\frac{1}{(c-m(s/x))^2}m'\left(\frac{s}{x}\right)\ds.
	\end{align*}
	By assumption (A1), we have that $|m'\left(\frac{s}{x}\right)|\lesssim \left(\frac{s}{x}\right)^{m_0-1}$. Moreover, $(c-m)^{-2}$ is bounded by assumption (A2). Hence
	\begin{equation*}
		x|H_c(x)|\lesssim x^{-m_0}\int_0^\infty \frac{|\sin(s)|}{s^{1-m_0}}\ds =C x^{-m_0}.
	\end{equation*}
	Dividing by $x$ on both sides gives the desired result. Now if $m_0=-1$, we use the estimate $|m'\left(\frac{s}{x}\right)|\lesssim \left(\frac{s}{x}\right)^{-2}$ for $s\geq x$ and the estimate $|m'\left(\frac{s}{x}\right)|\lesssim \left(\frac{s}{x}\right)^{-1}$ for $0<s<x$. Hence
	\begin{align*}
		x|H_c(x)|\lesssim & \int_0^x \frac{|\sin(s)|}{s}\ds+ x\int_x^\infty \frac{|\sin(s)|}{s^{2}}\ds \\
		\simeq & x+x|\ln(x)|,
	\end{align*}
	and the conclusion follows.
\end{proof}
From Lemmas \ref{lem: Lp} and \ref{lem: behaviour at 0}, we have the following Corollary:
\begin{corollary}
	\label{cor: integrability}
	Assume (A1) and (A2) are satisfied. Then $x\mapsto |x|^\alpha H_c(x)\in L^p(\R)$, $1\leq p\leq \infty$, if $\alpha> \max\lbrace 1+m_0,0\rbrace-\frac{1}{p}$.
\end{corollary}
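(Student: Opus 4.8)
The strategy is to localize, splitting $\R$ into the bounded piece $\lbrace|x|\le 1\rbrace$ and the exterior $\lbrace|x|>1\rbrace$: on the former I would use the small-$x$ asymptotics of Lemma \ref{lem: behaviour at 0}, and on the latter the rapid decay provided by Lemma \ref{lem: Lp}.

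First I would dispose of the exterior. Taking $p=\infty$ in Lemma \ref{lem: Lp} gives $\norm{(\cdot)^{l}H_c}_{L^\infty(\R)}<\infty$ for every integer $l\ge 1$, hence $|H_c(x)|\lesssim_l|x|^{-l}$ and thus $|x|^{\alpha}|H_c(x)|\lesssim_l|x|^{\alpha-l}$ on $\lbrace|x|>1\rbrace$. Choosing $l$ large enough that $(\alpha-l)p<-1$ (respectively $\alpha<l$ when $p=\infty$), this function lies in $L^p(\lbrace|x|>1\rbrace)$ for every $p\in[1,\infty]$, with no restriction on $\alpha$. The case $l=1$ of the same bound, $|H_c(x)|\lesssim|x|^{-1}$, holds on all of $\lbrace0<|x|\le1\rbrace$ and so upgrades the ``$|x|\ll1$'' estimate of Lemma \ref{lem: behaviour at 0} to a bound valid on the whole punctured interval:
\begin{equation*}
	|H_c(x)|\lesssim
	\begin{cases}
	|x|^{-1-m_0}, & -1<m_0<0,\\
	1+|\ln(|x|)|, & m_0=-1,\\
	1, & m_0<-1,
	\end{cases}
	\qquad 0<|x|\le1.
\end{equation*}

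It then only remains to record for which real exponents $\beta$ the function $x\mapsto|x|^{\beta}$, possibly multiplied by $1+|\ln(|x|)|$, belongs to $L^p(\lbrace|x|\le1\rbrace)$: this holds whenever $\beta p>-1$ if $p<\infty$ (a logarithmic factor being harmless), and whenever $\beta>0$ if $p=\infty$ (the logarithm again not changing the threshold). Inserting $\beta=\alpha-1-m_0$ when $-1<m_0<0$ and $\beta=\alpha$ when $m_0\le-1$, the resulting sufficient conditions on $\alpha$ become $\alpha>1+m_0-1/p$ and $\alpha>-1/p$ respectively; since $\max\lbrace1+m_0,0\rbrace$ equals $1+m_0$ in the first case and $0$ in the second, both collapse to $\alpha>\max\lbrace1+m_0,0\rbrace-1/p$ (with the convention $1/\infty=0$), which is the claim. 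No genuine obstacle arises; the one point worth spelling out is the promotion of Lemma \ref{lem: behaviour at 0} to a uniform bound on $\lbrace0<|x|\le1\rbrace$, which, as indicated, follows at once from $\norm{(\cdot)H_c}_{L^\infty(\R)}<\infty$.
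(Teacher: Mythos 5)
Your proof is correct and follows essentially the same route as the paper: split at $|x|=1$, use Lemma \ref{lem: Lp} for the exterior and Lemma \ref{lem: behaviour at 0} for the local integrability near the origin, then read off the exponent thresholds. Your extra care on the exterior (iterating the $L^\infty$ bound with large $l$ to cover $1\leq p<2$, which Lemma \ref{lem: Lp} alone does not state) and on promoting the $|x|\ll 1$ asymptotics to all of $0<|x|\leq 1$ only tightens steps the paper leaves implicit.
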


\begin{proof}
	By \eqref{eq: Lp}, $|\cdot|^\alpha H_c(\cdot)\in L^p(\R\setminus (-1,1))$ for all $1\leq p\leq \infty$ and all $\alpha\in \R$. It remains to consider the behaviour around the origin. If $-1<m_0<0$, then by Lemma \ref{lem: behaviour at 0}, we have that $|x|^\alpha |H_c(x)|\simeq |x|^{-1-m_0+\alpha}$ which is in $L_{loc}^p(\R)$ exactly when $\alpha>1+m_0-\frac{1}{p}$. For $m_0\leq -1$, $|x|^\alpha |H_c(x)|\lesssim |x|^\alpha |\ln(|x|)|$ which is in $L_{loc}^p(\R)$ if and only if $\alpha>-\frac{1}{p}$.
\end{proof}

The results above state that $H_c$ decays algebraically with arbitrary order, is a bounded function away from the origin, with the behaviour at the origin being given by Lemma \ref{lem: behaviour at 0}. As $H_c$ decays faster than any polynomial, the natural question to ask is whether it decays exponentially. This is indeed the case, if assumption (A1*) is satisfied in addition to (A1) and (A2).

\subsection{Exponential decay}
\label{subsec: exponential H_c}
Now we turn the exponential decay of $H_c$, under the additional assumption (A1*). By assumption (A1) we have that $m$ is real analytic, and the lemma below shows that, under the additional assumption (A1*), $m$ can be extended to complex analytic function in a horizontal strip in the complex plane, centred at the real line.

\begin{lemma}
	\label{lem: exponential decay kernel L2}
	Let (A1), (A2) and (A1*) be satisfied. Considering $m$ as a function in the complex plane, let $\delta_c>0$ be the smallest number for which there exists a $z_0\in \C$ with $\mathrm{Im}\, z_0=\delta_c$ such that $m(z_0)=c$. Then the kernel $H_c$ can be expressed as
	\begin{equation*}
	H_c(x)=\mathrm{e}^{-\delta_c|x|}\left(v+P_n(|x|)\right), \,\, x\in \R,
	\end{equation*}
	where $v\in L^p(\lbrace x\in \R : |x|\geq 1\rbrace)$, $1\leq p\leq \infty$ satisfies Lemma \ref{lem: behaviour at 0} and $P_n$ is a polynomial of order $n$, where $n$ is the order of the zero of $m'$ at $z_0$. In particular, if $m'(z_0)\neq 0$, then $P_n$ is a non-zero constant.
\end{lemma}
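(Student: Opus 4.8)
The plan is to study the kernel $H_c = \F^{-1}\bigl(\frac{m}{c-m}\bigr)$ by shifting the contour of integration in the inverse Fourier transform into the complex plane, using the analytic continuation of $m$ guaranteed by assumptions (A1) and (A1*). First I would record that, since $m$ is real analytic with derivative bounds $|m^{(n)}(\xi)| \le C_n(1+|\xi|)^{m_0-n}$ and the ratios $\frac{C_{n+1}/(n+1)!}{C_n/n!}$ converge to $k$, the Taylor series of $m$ around any real point has radius of convergence at least $1/k$ (or is entire if $k=0$), and the resulting local extensions patch together to a holomorphic function on the open strip $\{|\operatorname{Im} z| < 1/k\}$; moreover the decay bound (A1) persists on horizontal lines $\operatorname{Im} z = \tau$ for $|\tau| < \delta_c$, so that $\frac{m}{c-m}$ is holomorphic and decaying there, with no poles because $m(z)\neq c$ for $0\le\operatorname{Im} z<\delta_c$ by the minimality in the definition of $\delta_c$ (and by evenness $m(\bar z)=\overline{m(z)}$ handles $\operatorname{Im} z<0$).

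Next, for $x>0$ I would write $H_c(x) = \frac{1}{\sqrt{2\pi}}\int_\R \frac{m(\xi)}{c-m(\xi)}\mathrm{e}^{ix\xi}\dxi$ and push the contour down to $\operatorname{Im}\zeta = -\delta_c$ (for $x<0$, up to $\operatorname{Im}\zeta = +\delta_c$; the even symmetry of $H_c$ lets us treat only $x>0$). The horizontal pieces at infinity vanish by the algebraic decay of $\frac{m}{c-m}$ on horizontal lines. The key point is what is picked up at the singularity: since $m(z_0)=c$ with $z_0$ on the line $\operatorname{Im} z=-\delta_c$ (the reflection of the point in the lemma's statement), and since $m(z)-c$ vanishes at $z_0$ to some finite order — here one must check $m'(z_0)$ vanishes to order $n$, hence $m(z)-c$ vanishes to order $n+1$ — the function $\frac{m}{c-m}$ has a pole of order $n+1$ at $z_0$. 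Actually, to extract the behaviour cleanly I would not compute a residue of a high-order pole directly; instead I would write $\frac{m}{c-m} = \frac{1}{c-m} - 1$ (valid since $\frac{m}{c-m}=\frac{c-(c-m)}{c-m}$) and deform the contour for the $\frac{1}{c-m}$ term, whose singularity structure near $z_0$ is governed by $(c-m(z))^{-1} \sim (\text{const})(z-z_0)^{-(n+1)}$. Picking up $2\pi i$ times the residue of $(c-m(z))^{-1}\mathrm{e}^{ix z}/\sqrt{2\pi}$ at $z_0$ produces a term of the form $\mathrm{e}^{ixz_0}$ times a polynomial in $x$ of degree exactly $n$ (from differentiating $\mathrm{e}^{ixz}$ up to $n$ times against the Laurent tail), and $\mathrm{e}^{ixz_0} = \mathrm{e}^{-\delta_c x}\mathrm{e}^{i x\operatorname{Re} z_0}$. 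If there are several points $z_j$ with $\operatorname{Im} z_j = -\delta_c$ and $m(z_j)=c$, their contributions combine; I would note that by evenness they come in symmetric pairs and the sum is real, yielding the claimed form $\mathrm{e}^{-\delta_c x}(v + P_n(x))$ after absorbing oscillatory factors.

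Finally, the remainder $v$: after the deformation, the leftover contour integral runs along $\operatorname{Im}\zeta = -\delta_c$ (indented around the $z_j$), and the integrand there is $\frac{m(\zeta)}{c-m(\zeta)}\mathrm{e}^{ix\zeta}$ with $|\mathrm{e}^{ix\zeta}| = \mathrm{e}^{-\delta_c x}$; the $x$-dependence is exactly $\mathrm{e}^{-\delta_c x}$ times the Fourier transform (in the real variable $\operatorname{Re}\zeta$) of the restriction of $\frac{m}{c-m}$ to that line. Since that restriction inherits the same regularity and decay structure as $\frac{m}{c-m}$ on $\R$ (derivative bounds of type (A1) hold on the line $\operatorname{Im}\zeta=-\delta_c$ with possibly worse constants, but the same exponents), its inverse Fourier transform $v$ satisfies the same $L^p$ and near-origin bounds as $H_c$ itself does in Lemmas \ref{lem: Lp} and \ref{lem: behaviour at 0} — in particular $v \in L^p(\{|x|\ge 1\})$ for all $p$ — which is precisely the assertion. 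I expect the main obstacle to be the bookkeeping at the singularity: justifying rigorously that $m(z)-c$ has a zero of order exactly $n+1$ at $z_0$ (relating the order of the zero of $m'$ to that of $m-c$, using $m'(z_0)$ is the first potentially nonzero coefficient), that the Laurent expansion of $(c-m)^{-1}$ has its principal part of length exactly $n+1$, and that the residue computation with the entire factor $\mathrm{e}^{ixz}$ indeed produces a polynomial in $x$ of degree precisely $n$ with nonvanishing leading coefficient; the contour-shift estimates themselves are routine given the decay on horizontal lines.
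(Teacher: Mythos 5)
Your overall strategy -- analytically continue $m$ into a strip, shift the contour of the inverse Fourier integral to the line through the nearest point where $m=c$, and read off $\mathrm{e}^{-\delta_c|x|}$ times a degree-$n$ polynomial from the residue plus an $L^p$ remainder from the shifted line -- is the same as the paper's. But as written the argument has three concrete problems. First, the direction of the shift is reversed: for $x>0$ one has $|\mathrm{e}^{ixz}|=\mathrm{e}^{-x\,\mathrm{Im}\,z}$, so you must push the contour \emph{up} to $\mathrm{Im}\,\zeta=+\delta_c$ to gain decay; pushing down to $-\delta_c$ produces $\mathrm{e}^{+\delta_c x}$ and the vertical sides and shifted integral blow up. (Your later identity $\mathrm{e}^{ixz_0}=\mathrm{e}^{-\delta_c x}\mathrm{e}^{ix\,\mathrm{Re}\,z_0}$ already presupposes $\mathrm{Im}\,z_0=+\delta_c$.) Second, the integrability bookkeeping fails: under (A1) the symbol $\frac{m}{c-m}$ only decays like $|\xi|^{m_0}$ with $m_0\in(-1,0)$ possible, so the representation $H_c(x)=\frac{1}{\sqrt{2\pi}}\int_\R\frac{m}{c-m}\mathrm{e}^{ix\xi}\dxi$ is not an absolutely convergent integral, and your decomposition (which should read $\frac{m}{c-m}=\frac{c}{c-m}-1$, not $\frac{1}{c-m}-1$) makes matters worse, since $\frac{c}{c-m}\to 1$ at infinity and the contour shift for that piece is not justified at all. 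The paper's way around this is the one ingredient your sketch is missing: it works with $g=\bigl(\frac{m}{c-m}\bigr)'=\frac{cm'}{(c-m)^2}\in L^1\cap L^2$, i.e.\ with $\F(i\cdot H_c(\cdot))$, at the cost of a pole of order $n+2$ and a final division by $x$ (legitimate because $g$ is a derivative, so its residue vanishes and the resulting polynomial $Q_{n+1}$ has $Q_{n+1}(0)=0$).

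Third, the treatment of the remainder $v$ is asserted rather than proved: the restriction of $\frac{m}{c-m}$ (or of $g$) to the shifted line $\mathrm{Im}\,\zeta=\delta_c$ has poles \emph{on that line} at the points $z_j$, so it does not ``inherit the same regularity and decay structure'' as on $\R$ -- it has non-integrable singularities of order at least $n+1$. One must indent the contour, take a symmetric limit $\varepsilon\to 0^+$, explicitly subtract the principal part of the Laurent expansion at $z_j$, and check that its (principal-value) Fourier transform combines with the half-residue from the indentation to reproduce exactly the polynomial term, leaving a genuine $L^p$ function $w$ whose transform is $v$. This is the technically delicate half of the paper's proof and it is where your claim that $v$ satisfies the conclusions of Lemmas \ref{lem: Lp} and \ref{lem: behaviour at 0} actually gets established. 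A smaller gap: you assume the local Taylor extensions patch into a strip of width $1/k$ containing $z_0$, whereas the paper also handles the case $\delta_c\geq 1/k$ (removable singularities of $\frac{m}{c-m}$ and branch cuts of $m$, the latter shown to be harmless for $g$ by evenness). The parts you do flag as needing verification -- that $c-m$ vanishes to order exactly $n+1$ at $z_0$ and that the leading coefficient of the polynomial is nonzero -- are indeed settled in the paper by the explicit formula $a_{n+1}=i^{n+1}c(n+1)/m^{(n+1)}(z_0)$.
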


\begin{proof}
	We want to use Paley-Wiener theory to show exponential decay. While $m$ is not guaranteed to be in $L^2$, assumption (A1) assures that $m'$ is, and we have
	\begin{equation*}
		\F (i\cdot H_c(\cdot))=\left(\frac{m}{c-m}\right)'=\frac{cm'}{(c-m)^2}:=g.
	\end{equation*}
	We claim that $g$ can be extended to a meromorphic function on a strip in the complex plane, potentially with branch cuts for $m$, centered around the real axis, containing at least one point where $m(z)=c$, and these are the only poles of $g$ in the strip. As $m$ is an analytic function on $\R$, we have at each point $x_0\in \R$ a local extension to the complex plane given by
	\begin{equation*}
	m(z)=\sum_{n=0}^{\infty} \frac{m^{(n)}(x_0)}{n!}(z-x_0)^n,
	\end{equation*}
	which is valid for all $z$ within a ball around $x_0$ with non-zero radius depending on $x_0$. Let
	\begin{equation*}
	\sigma := \inf_{x_0\in \R} \sup \lbrace r : \sum_{n=0}^{\infty} \frac{|m^{(n)}(x_0)|}{n!}r^n <\infty \rbrace.
	\end{equation*}
	That is, $\sigma$ is the infimum of the convergence radius over all points in $\R$. By (A1*), for any $x_0\in \R$ the convergence radius of the series above is greater than or equal to the convergence radius of
	\begin{equation*}
	\sum_{n=0}^{\infty} \frac{C_n(1+|x_0|)^{m_0-n}}{n!}z^n,
	\end{equation*}
	which converges for all $|z|<\frac{1}{k(1+|x_0|)^{m_0}}$ where $k$ is as in (A1*). Hence there is a lower bound on the convergence radius that is independent of the point $x_0$, and $\sigma\geq \frac{1}{k}>0$. Moreover, the convergence radius goes to infinity as $|x_0|\rightarrow \infty$. If $\delta_c<\sigma$, that is, there is a point $z_0$ such that $m(z_0)=c$ in this strip, we are done. Assume therefore that $\delta_c\geq\sigma$. The radius of convergence for the power series of $m$ is the distance to the closest singularity, which is either an isolated singularity or a branch point. In the former case, $m/(c-m)$ will be bounded around the singularity, and $g$ has a removable singularity at this point and can be extended further. The latter case of a branch cut for $m$ has no impact for our purposes. To see this, let $h(z)$ be real valued and even on the real line. By symmetry, we can without loss of generality assume that $h$ has a branch cut on the imaginary axis. As $h$ is even on the real line and holomorphic away from the branch cut, we have
	\begin{equation*}
		h(-\bar{z})=h(\bar{z})=\overline{h(z)}.
	\end{equation*}
	Hence $h$ has equal real part but opposite imaginary part on the two branches. Moreover, as $h$ is even $\mathrm{Re} \, h'(z)=0$ along the imaginary axis and
	\begin{equation*}
		h'(-\bar{z})=-h'(\bar{z})=-\overline{h'(z)}.
	\end{equation*}
	This implies that $h'$ has the same value on each part of the branch, hence it is not a necessary branch cut for $h'$. In general, if there are two or more branch cuts that are mirrored over the imaginary axis, the integrals of $g$ around them will cancel each other out, and we can without loss of generality assume that there are no branch cuts. This proves our original claim.
	
	Note that if $m(z_0)=c\in \R$, the symmetry of $m$ on the real line and the analyticity of $m$ around $z_0$ implies that
	\begin{equation*}
		m(-z_0)=m(z_0)=c=\overline{m(z_0)}=m(\overline{z_0})=m(-\overline{z_0}).
	\end{equation*}
	That is, the poles of $g$ are symmetric with respect to the real axis and the imaginary axis. As $g$ is the derivative of a function, we immediately get that the residue of $g$ at the poles $z_0=\xi \pm i\delta_c$ such that $m(z_0)=c$ is zero: for any $0<r<s$ such that $g(z)$ is analytic for $0<|z-z_0|<s$,
	\begin{equation*}
		\frac{1}{2\pi i}\int_{|z-z_0|=r} g(z) \dz =\frac{1}{2\pi i}\int_{|z-z_0|=r} \frac{\mathrm{d}}{\mathrm{d}z}\frac{1}{c-m(z)} \dz=0,
	\end{equation*}
	as the integral is taken over a closed circle. Let $n$ be the order of the zero of $m'$ at $z_0$. Then, locally around $z_0$,
	\begin{equation*}
		g(z)=c\frac{(n+1)!(n+1)}{m^{(n+1)}(z_0)}(z-z_0)^{-n-2}+O((z-z_0)^{-n-1}).
	\end{equation*}
	That is, $g$ and therefore also $g(z)\mathrm{e}^{ixz}$, $x\in \R\setminus\lbrace 0\rbrace$, has a pole of order $n+2$ at $z_0$.  It follows that
	\begin{align}
		\mathrm{Residue}\, [g(z)\mathrm{e}^{ixz}, z_0]= & \frac{1}{(n+1)!}\lim_{z\rightarrow z_0} \frac{\mathrm{d}^{n+1}}{\mathrm{d} z^{n+1}}[g(z)\mathrm{e}^{ixz}(z-z_0)^{n+2}] \nonumber \\
		= & \sum_{k=0}^{n+1} \frac{1}{k!(n+1-k)!}\left[\lim_{z\rightarrow z_0}\frac{\mathrm{d}^{n+1-k}}{\mathrm{d} z^{n+1-k}}g(z)(z-z_0)^{n+2}\right]i^k x^k \mathrm{e}^{ixz_0} \nonumber \\
		=: & \mathrm{e}^{ixz_0} Q_{n+1}(x) \label{eq: residue at z_0 1},
	\end{align}
	where $Q_{n+1}(x)$ is a polynomial of order $n+1$. As the residue of $g$ is zero, $Q_{n+1}(0)=0$, and we can write $Q_{n+1}$ as
	\begin{equation}
		\label{eq: Q}
		Q_{n+1}(x)=\sum_{k=1}^{n+1} a_k x^k, \quad a_{n+1}=i^{n+1} c\frac{n+1}{m^{(n+1)}(z_0)}\neq 0.
	\end{equation}
	In particular, $\mathrm{Residue}\, [g(z)\mathrm{e}^{ixz}, z_0]\neq 0$ for $x\neq 0$. Furthermore, as $g(-\overline{z})=-\overline{g(z)}$, we see that the residue of $g(z)\mathrm{e}^{ixz}$ at $z_0$ is purely real if $z_0=\pm i\delta_c$. If $z_0=\xi_0 \pm i \delta_c$, $\xi_0\neq 0$, then imaginary parts of the residues at $z_0$ and $-\overline{z_0}$ cancel each other, while the real parts add up. For simplicity we therefore assume that we only have poles at $\pm i\delta_c$. From \eqref{eq: residue at z_0 1} we then get
	\begin{equation}
	\label{eq: residue at z_0}
	\mathrm{Residue}\, [g(z)\mathrm{e}^{ixz}, \pm i\delta_c]=\mathrm{e}^{\mp x\delta_c}Q_{n+1}(x)
	\end{equation}
	for any $x\in \R$.
	
	Let $y>0$ be fixed and $|x|>|y|k$. By (A1*),
	\begin{align}
	|m'(x+iy)|\leq & \sum_{n=0}^\infty \left|\frac{m^{(n+1)}(x)}{n!} (iy)^n\right| \nonumber\\
	\leq & \sum_{n=0}^\infty \frac{C_{n+1}}{n!}|y|^n (1+|x|)^{m_0-n-1} \nonumber \\
	= & (1+|x|)^{m_0-1} \sum_{n=0}^\infty \frac{C_{n+1}}{n!} \left(\frac{|y|}{1+|x|}\right)^n \nonumber \\
	\leq & K(1+|x|)^{m_0-1}, \label{eq: decay of m'(x+iy)}
	\end{align}
	for some $K>0$ (that will in fact decrease as $|x|$ increases). As $m_0<0$, this is in $L^p(\R)$ for all $1\leq p\leq \infty$. This also implies that for $x>0$,
	\begin{equation}
	\label{eq: vertical sides x>0}
	\lim_{|\xi| \rightarrow \infty} \sup_{0\leq \eta \leq \delta_c} |g(\xi+i\eta)\mathrm{e}^{ixz}|=0,
	\end{equation}
	as $\mathrm{e}^{ixz}$ is bounded for $x>0$ and $\mathrm{Im}\, z\geq 0$, where $z=\xi+i\eta$.
	If $x<0$ and $\mathrm{Im}\, z\leq 0$, then $\mathrm{e}^{ixz}$ is also bounded and
	\begin{equation*}
	\lim_{|\xi| \rightarrow \infty} \sup_{-\delta_c \leq \eta \leq 0} |g(\xi+i\eta)\mathrm{e}^{ixz}|=0.
	\end{equation*}
	We consider first the case $x>0$; the case when $x<0$ is similar. The function $g(\cdot +i\delta_c)$ is not in $L^2(\R)$, but for every $\varepsilon>0$, we have $g(\cdot+i\delta_c)\in L^2(\R\setminus [-\varepsilon,\varepsilon])$ by \eqref{eq: decay of m'(x+iy)}. To apply Cauchy's theorem, we consider the domain defined by the line segments $[-R,R]$, $[\pm R, \pm R+i\delta_c]$, $[-R+i\delta_c, -\varepsilon+i\delta_c]$, and $[\varepsilon+i\delta_c, R+i\delta_c]$ and the half-circle $\Gamma_\varepsilon=\lbrace z=i\delta_c+\varepsilon\mathrm{e}^{i\theta} : \pi\leq \theta \leq 2\pi\rbrace$. By \eqref{eq: vertical sides x>0}, the integral over the vertical lines vanish as $R\rightarrow \infty$. As $g(z)\mathrm{e}^{ixz}$ has no singularities in this domain, Cauchy's theorem then gives (letting $R\rightarrow \infty$)
	\begin{equation}
	\label{eq: indented rectangle cauchy}
	\frac{1}{\sqrt{2\pi}}\int_\R g(\xi)\mathrm{e}^{ix\xi}\dxi=\mathrm{e}^{-\delta_c x}\frac{1}{\sqrt{2\pi}}\int_{|\xi|\geq \varepsilon} g(\xi+i\delta_c)\mathrm{e}^{ix\xi}\dxi+ \frac{1}{\sqrt{2\pi}}\int_{\Gamma_\varepsilon} g(z)\mathrm{e}^{ixz}\dz.
	\end{equation}
	Recall that $(2\pi)^{-1/2}\int_\R g(\xi)\mathrm{e}^{ix\xi}\dxi=-ix H_c(x)$. Multiplying by $\mathrm{e}^{\delta_c x}$ on both sides, we get
	\begin{equation*}
	-\mathrm{e}^{x\delta_c}ixH_c(x)=\frac{1}{\sqrt{2\pi}}\int_{|\xi|\geq \varepsilon} g(\xi+i\delta_c)\mathrm{e}^{ix\xi}\dxi+\frac{1}{\sqrt{2\pi}} \mathrm{e}^{\delta_c x}\int_{\Gamma_\varepsilon} g(z)\mathrm{e}^{ixz}\dz.
	\end{equation*}
	The first term on the right hand side is in $L^2(\R)$. For the second term, the (fractional) residue theorem gives that for $\varepsilon>0$ small enough,
	\begin{align*}
	\mathrm{e}^{\delta_c x}\frac{1}{\sqrt{2\pi}}\int_{\Gamma_\varepsilon} g(z)\mathrm{e}^{ixz}\dz=& \mathrm{e}^{\delta_c x} \frac{1}{\sqrt{2\pi}} \left(i\pi \mathrm{Residue}\, [g(z)\mathrm{e}^{ixz},i\delta_c]+O(\varepsilon)\right)\\
	=& \sqrt{\frac{\pi}{2}} iQ_{n+1}(x)+O(\varepsilon).
	\end{align*}
	This is clearly not in $L^p(\R)$ for any $p\in[1,\infty]$. These calculations were for $x>0$; if $x<0$ we consider the conjugate of the indented rectangle and we obtain the equivalent result. To get the expression for $H_c$, consider the function $\xi^{n+2}g(\xi+i\delta_c)$. Taylor expanding around $\xi=0$, we get by \eqref{eq: residue at z_0 1} and \eqref{eq: Q} that
	\begin{equation*}
	\xi^{n+2}g(\xi+i\delta_c)=\sum_{k=0}^n \frac{(n+1-k)!}{(n+1)!} i^{k-n-1} a_{n+1-k}\xi^{k} +O(\xi^{n+2}),
	\end{equation*}
	and hence
	\begin{equation*}
	g(\xi+i\delta_c)=\sum_{k=0}^n (n+1-k)! i^{k-n-1} a_{n+1-k}\xi^{k-n-2}+O(1)
	\end{equation*}
	for small $\xi$. As $g(\cdot+i\delta_c)\in L^p(\R\setminus (-1,1))$ for all $1\leq p\leq \infty$, we have that
	\begin{equation*}
	g(\xi+i\delta_c)=\sum_{k=0}^n (n+1-k)! i^{k-n-1} a_{n+1-k}\xi^{k-n-2}+w(\xi),
	\end{equation*}
	where $w\in L^p(\R)$ for all $1\leq p\leq \infty$. Hence the "ill-behaved" part of $\int_{|\xi|\geq \varepsilon} g(\xi+i\delta_c)\mathrm{e}^{ix\xi}\dxi$ can be explicitly calculated as $\varepsilon\rightarrow 0^+$, as the limit is symmetric (otherwise it is not defined). The calculation is straightforward calculus and the result can be found in any table of Fourier transforms, and we find that (again we are assuming $x>0$)
	\begin{equation*}
	\lim_{\varepsilon\rightarrow 0^+} \frac{1}{\sqrt{2\pi}}\int_{|\xi|\geq \varepsilon} g(\xi+i\delta_c)\mathrm{e}^{ix\xi}\dxi= \sqrt{\frac{\pi}{2}} iQ_{n+1}(x) + \check{w}(x),
	\end{equation*}
	where $\check{w}\in L^p(\R)$ for all $2\leq p\leq \infty$. Hence, taking the limit $\varepsilon\rightarrow 0^+$ in \eqref{eq: indented rectangle cauchy}, we get
	\begin{equation}
	\label{eq: expression for H_c}
	-ixH_c(x)=\mathrm{e}^{-\delta_c|x|}\left(\check{w}(x)-i\sqrt{2\pi} Q_{n+1}(x)\right).
	\end{equation}
	Dividing by $-ix$ for $x\neq 0$ gives the expression for $H_c$, with $v=-\frac{\check{w}}{ix}$ and $P_n(x)=\sqrt{2\pi}Q_{n+1}(x)x^{-1}$ (recall that $Q_{n+1}(0)=0$, so that $Q_{n+1}(x)x^{-1}$ is indeed a polynomial).
	 
\end{proof}

\begin{corollary}
	\label{cor: Lp kernel}
	Let (A1), (A2) and (A1*) be satisfied, and let $\delta_c$ be as in Lemma \ref{lem: exponential decay kernel L2}. Then, for all $0<\delta<\delta_c$, we have that $\mathrm{e}^{\delta|\cdot|}H_c(\cdot)\in L^p(\R)$ for all $1\leq p<\frac{1}{1+m_0}$ if $-1< m_0<0$, all $1\leq p<\infty$ if $m_0=-1$, and all $1\leq p\leq \infty$ if $m_0<-1$.
\end{corollary}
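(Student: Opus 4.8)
The plan is to reduce everything to the two structural facts already available: the representation of $H_c$ in Lemma \ref{lem: exponential decay kernel L2} and the local description near the origin in Lemma \ref{lem: behaviour at 0}. Fix $\delta \in (0,\delta_c)$ and put $\eta := \delta_c - \delta > 0$. By Lemma \ref{lem: exponential decay kernel L2},
\[
\mathrm{e}^{\delta|x|}H_c(x) = \mathrm{e}^{-\eta|x|}\bigl(v(x) + P_n(|x|)\bigr), \qquad x \in \R,
\]
so it suffices to show that $x \mapsto \mathrm{e}^{-\eta|x|}\bigl(v(x) + P_n(|x|)\bigr)$ lies in the asserted $L^p$ spaces. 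I would split $\R$ into $\{|x| \geq 1\}$ and $\{|x| < 1\}$ and estimate the two pieces separately.

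On $\{|x| \geq 1\}$ the estimate is immediate and independent of $m_0$. The factor $\mathrm{e}^{-\eta|x|}$ is bounded there, and $v \in L^p(\{|x|\geq 1\})$ for every $1 \leq p \leq \infty$ by Lemma \ref{lem: exponential decay kernel L2}, so $\mathrm{e}^{-\eta|x|}v \in L^p(\{|x|\geq 1\})$ for all such $p$; moreover $\mathrm{e}^{-\eta|x|}P_n(|x|)$ decays exponentially and therefore also belongs to $L^p(\{|x|\geq 1\})$ for every $p$. Hence the restriction of $\mathrm{e}^{\delta|\cdot|}H_c$ to $\{|x|\geq 1\}$ is in $L^p$ for all $1 \leq p \leq \infty$, and the polynomial $P_n$ is harmless here — which is exactly why the range $\delta < \delta_c$ is imposed.

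On $\{|x| < 1\}$ the factor $\mathrm{e}^{-\eta|x|}$ is bounded above and below by positive constants and $P_n(|x|)$ is bounded, so the whole question is whether $v \in L^p(\{|x| < 1\})$, and this is settled by Lemma \ref{lem: behaviour at 0}, whose estimates $v$ inherits via Lemma \ref{lem: exponential decay kernel L2}. If $-1 < m_0 < 0$, then $|v(x)| \simeq |x|^{-1-m_0}$ near the origin, and since $\int_0^1 t^{-(1+m_0)p}\dt < \infty$ precisely when $(1+m_0)p < 1$, one gets $v \in L^p(\{|x|<1\})$ exactly for $1 \leq p < \frac{1}{1+m_0}$. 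If $m_0 = -1$, then $|v(x)| \simeq |\ln|x||$, which is locally $L^p$ for every $p < \infty$ but not $L^\infty$. If $m_0 < -1$, then $v$ is bounded near the origin, so $v \in L^p(\{|x|<1\})$ for all $1 \leq p \leq \infty$; combined with the bounded pieces on $\{|x|\geq 1\}$ this also yields $\mathrm{e}^{\delta|\cdot|}H_c \in L^\infty(\R)$. Patching the two regions together gives the stated ranges in each of the three cases.

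I do not expect a genuine obstacle here: the corollary follows by direct inspection once Lemmas \ref{lem: behaviour at 0} and \ref{lem: exponential decay kernel L2} are in hand. The only point that needs any care is the threshold exponent $\frac{1}{1+m_0}$, which is dictated entirely by the local singularity $|x|^{-1-m_0}$ of $v$ at the origin; everything at infinity is absorbed by the strictly positive exponential gain $\mathrm{e}^{-\eta|x|}$.
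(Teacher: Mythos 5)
Your proposal is correct and follows essentially the same route as the paper: apply the representation from Lemma \ref{lem: exponential decay kernel L2}, note that the exponential gain $\mathrm{e}^{-(\delta_c-\delta)|x|}$ handles everything away from the origin, and reduce the remaining question to the local singularity of $v$ described by Lemma \ref{lem: behaviour at 0}. The only cosmetic difference is that the paper cites Corollary \ref{cor: integrability} with $\alpha=0$ for the cases $-1\leq m_0<0$, whereas you recompute the local $L^p$ thresholds directly from the pointwise behaviour; the two are equivalent.
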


\begin{proof}
	Let $\delta$ as in the assumptions. By Lemma \ref{lem: exponential decay kernel L2}, we have that
	\begin{equation*}
	\mathrm{e}^{\delta|x|}H_c(x)=\mathrm{e}^{-(\delta_c-\delta)|x|}\left(v(x)+P_n(|x|)\right).
	\end{equation*}
	As $v\in L^p(\lbrace x\in \R : |x|\geq 1\rbrace)$ for all $1\leq p\leq \infty$ and $\delta_c-\delta>0$, we get that $\mathrm{e}^{\delta|\cdot|}H_c(\cdot)\in L^p(\lbrace x\in \R : |x|\geq 1\rbrace)$, hence we need only check the behaviour at $0$. If $m_0<-1$, then by Lemma \ref{lem: behaviour at 0}, we have that $v\in L^\infty (\R)$, and the conclusion follows. For $-1\leq m_0<0$, the result follows from Corollary \ref{cor: integrability} with $\alpha=0$.
\end{proof}

We have established the precise decay rate of $H_c$, which is sufficient to establish the precise decay rate of (decaying) solutions to \eqref{eq: main eq general} (see Section \ref{sec: decay} below).

\section{Decay of solitary waves}
\label{sec: decay}
With the properties of $H_c$ established in Section \ref{sec: H_c}, we can now establish the decay properties of solutions to \eqref{eq: main eq general}, under assumption (A3) on $G$. We start with algebraic decay.

\subsection{Algebraic decay of solitary waves}
\begin{theorem}
	\label{thm: algebraic decay}
	Let (A1), (A2) and (A3) be satisfied and suppose that $u\in L^\infty(\R)$ with $\lim_{|x|\rightarrow \infty} u(x)=0$ is a solution to \eqref{eq: main eq general}. Then
	\begin{equation*}
		(\cdot)^l u(\cdot)\in L^q(\R)
	\end{equation*}
	for all $l\geq 0$ and all $q\in (2,\infty)$.
\end{theorem}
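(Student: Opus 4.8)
The plan is to run a bootstrap argument on the convolution equation \eqref{eq: main eq general}, exploiting that $G(u)$ is controlled by $|u|^r$ for small $u$ (assumption (A3)) together with the fact that $H_c$ decays faster than any polynomial (Corollary \ref{cor: integrability}). Since $u\in L^\infty(\R)$ and $u(x)\to 0$ as $|x|\to\infty$, for $|x|$ large the nonlinear term satisfies $|G(u(x))|\lesssim |u(x)|^r$ with $r>1$. The first step is to rewrite \eqref{eq: main eq general} as
\begin{equation*}
u(x)=\frac{1}{c-\tfrac{G(u(x))}{u(x)}}\,(H_c\ast G(u))(x),
\end{equation*}
noting that $\tfrac{G(u(x))}{u(x)}\to 0$ as $|x|\to\infty$ (again by (A3) with $r>1$), so the prefactor is bounded and bounded away from zero outside a compact set; away from that set it does not affect the weighted integrability. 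Thus it suffices to control $|\cdot|^l (H_c\ast G(u))(\cdot)$ in $L^q$.

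The key step is the induction on the polynomial weight. Suppose we have shown $(\cdot)^{l}u(\cdot)\in L^q(\R)$ for some $l\geq 0$ and all $q\in(2,\infty)$ (the base case $l=0$ is immediate since $u\in L^\infty$ and $u$ decays, so $u\in L^q$ for $q$ large, and one upgrades to all $q>2$ using that $G(u)\in L^1\cap L^\infty$ locally and the kernel estimates). Using the elementary inequality $|x|^{l}\lesssim |x-y|^{l}+|y|^{l}$, or more precisely its refinement with an extra power distributed between the two factors, one estimates
\begin{equation*}
|x|^{l'}\,|(H_c\ast G(u))(x)|\lesssim \bigl((|\cdot|^{l'}H_c)\ast |G(u)|\bigr)(x)+\bigl(H_c\ast (|\cdot|^{l'}|G(u)|)\bigr)(x).
\end{equation*}
For the first term, Corollary \ref{cor: integrability} gives $|\cdot|^{l'}H_c\in L^1(\R)$ for every $l'\geq 0$, and $|G(u)|\lesssim |u|^r\in L^{q/r}$ (for large $|x|$; the compact part is harmless since $G$ is bounded on compacts and $H_c$ is integrable), so by Young's inequality this term lies in $L^{q/r}$, hence in $L^q$ after noting $r>1$ shrinks the exponent — one must track this carefully, using instead Young with $L^1\ast L^q\to L^q$ once the weight is moved entirely onto $H_c$. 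For the second term, write $|\cdot|^{l'}|G(u)|\lesssim |\cdot|^{l'}|u|^{r}=\bigl(|\cdot|^{l'/r}|u|\bigr)^{r}$ when $u$ is small; choosing $l'$ so that $l'/r\leq l$, the inductive hypothesis gives $|\cdot|^{l'/r}u\in L^{q}$ for all $q>2$, hence $|\cdot|^{l'}|u|^{r}\in L^{q/r}\subset L^1$ locally-away-from-infinity and in fact in $L^{p}$ for a range of $p$; convolving with $H_c\in L^1$ via Young's inequality then keeps us in the appropriate $L^q$. Iterating, each step multiplies the admissible weight by the factor $r>1$, so after finitely many steps we pass any prescribed $l$.

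The main obstacle is organizing the exponents so that the gain factor $r>1$ is genuinely used to push the weight up rather than being lost to a worsening Lebesgue exponent: one needs to interpolate between the $L^\infty$ bound on $u$ (which costs no weight) and the weighted $L^q$ bounds, balancing how much of the polynomial weight is placed on $H_c$ (always fine, by Corollary \ref{cor: integrability}) versus on $u^{r}$ (which, via $u^{r}=(u)\cdot u^{r-1}$ with $u^{r-1}\in L^\infty$ small, effectively only costs a weight $l'/r$ on a single copy of $u$). A secondary technical point is the splitting into a region near infinity, where (A3) applies and $|G(u)|\lesssim|u|^r$, and a compact region, where $G(u)$ is merely bounded; on the compact region the weight $|x|^{l'}$ is bounded and $H_c\in L^1\cap L^\infty$, so that contribution is trivially in every $L^q$. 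Once $(\cdot)^l u\in L^q$ for all $l$ and all $q>2$, part (i) of Theorem \ref{thm: decay} will follow by a further application of \eqref{eq: main eq general} and the $L^\infty$ bound on $H_c$ away from the origin, but that is the content of the next result and not needed here.
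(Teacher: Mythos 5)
There is a genuine gap, and it sits exactly where you yourself flag ``the main obstacle'': your iteration never gets off the ground. Two concrete problems. First, the base case is not immediate: $u\in L^\infty(\R)$ together with $\lim_{|x|\to\infty}u(x)=0$ does \emph{not} give $u\in L^q(\R)$ for any finite $q$ (consider $x\mapsto 1/\log(2+|x|)$), so already the case $l=0$ requires the full argument, and with it the integrability of $G(u)$ on which your first convolution term $(|\cdot|^{l'}H_c)\ast|G(u)|$ relies. Second, and more fundamentally, your inductive step converts a bound at weight $l$ into one at weight $l'\leq rl$, because you pay for the weight on the nonlinearity by writing $|y|^{l'}|u(y)|^r=(|y|^{l'/r}|u(y)|)^r$ and need $l'/r\leq l$. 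Starting from $l=0$ this yields $l'\leq 0$: the multiplicative gain $r$ applied to weight zero produces weight zero, so the scheme never reaches any positive $l$. The exponent $r>1$ cannot be used to multiply the weight; it has to be used to produce \emph{smallness}.

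That smallness is the actual engine of the proof. Since $|G(u)|\lesssim|u|^r$ with $r>1$ and $u\to0$, for every $\delta>0$ there is $R_\delta$ with $|G(u(y))|\leq\delta|u(y)|$ for $|y|\geq R_\delta$; the weight on $G(u)$ is then carried at \emph{full} order by a single copy of $u$, namely $|y|^{l'}|G(u(y))|\leq\delta\,|y|^{l'}|u(y)|$, and the resulting contribution is absorbed into the left-hand side after taking norms, choosing $R_\delta$ (equivalently $\delta$) so that the coefficient of the absorbed term is less than one; to make the absorption legitimate one must work with truncated weights such as $\min(|x|^{l'},N)$ so that the quantity being absorbed is finite a priori, and then pass to the limit in $N$. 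The paper sets this up differently in form but identically in substance: it uses H\"older's inequality against $(1+|\cdot|)^\alpha H_c\in L^p(\R)$ with $p\in(1,2)$ and $\alpha>1-1/p$ to get $|u(x)|\leq C\bigl(\int_\R|G(u(y))|^q(1+|x-y|)^{-\alpha q}\,\mathrm{d}y\bigr)^{1/q}$ for $|x|\geq R_\delta$, with $C=(c-\delta)^{-1}\|(1+|\cdot|)^\alpha H_c\|_{L^p(\R)}$, and then runs the truncation-and-absorption iteration of Theorem 3.9 in \cite{Bruell2017sad}. Your decomposition of $|x|^{l'}(H_c\ast G(u))$, your use of Corollary \ref{cor: integrability}, and your treatment of the prefactor $c-G(u)/u$ are all fine; what is missing is the absorption mechanism together with the a priori finiteness supplied by truncation, without which neither the base case nor any inductive step closes.
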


\begin{proof}
	Choose $p\in (1,2)$ and let $\alpha=\alpha(p)$ a constant satisfying
	\begin{equation*}
		\alpha>1-\frac{1}{p}.
	\end{equation*}
	In particular, $\alpha$ satisfies the condition in Corollary \ref{cor: integrability}, so that $(1+|\cdot|)^\alpha H_c(\cdot)\in L^p(\R)$. Let
	As $|G(u)|\lesssim |u|^r$ for some $r>1$ and $\lim_{|x|\rightarrow \infty} u(x)=0$, we get that for any $\delta>0$, there exists an $R_\delta \geq 0$ such that
	\begin{equation*}
		|G(u(x))|\leq \delta |u(x)| \,\, \text{for all} \,\, |x|\geq R_\delta.
	\end{equation*}
	Picking $0<\delta < c$, we get that
	\begin{equation*}
		c-\frac{G(u(x))}{u(x)}\geq c-\delta>0 \,\, \text{for all} \,\, |x|\geq R_\delta
	\end{equation*}
	As $u$ is a solution to \eqref{eq: main eq general}, we get that
	\begin{align*}
		\left|u(x)\left(c-\frac{G(u(x))}{u(x)}\right)\right| & = \left|\int_\R H_c(x-y)(1+|x-y|)^\alpha \frac{G(u(y))}{(1+|x-y|)^\alpha}\dy \right| \\
		& \leq \int_\R |H_c(x-y)|(1+|x-y|)^\alpha \frac{|G(u)|}{(1+|x-y|)^\alpha}\dy.
	\end{align*}
	Letting $q$ be the conjugate of $p$, we get by H\"older's inequality that
	\begin{equation}
		|u(x)|\leq C\left(\int_\R \frac{|G(u)|^{q}}{(1+|x-y|)^{\alpha q}}\dy \right)^{1/q} \,\, \text{for all} \,\, |x|\geq R_\delta,
	\end{equation}
	where $C=C_{\alpha, p, \delta}=(c-\delta)^{-1}\|(1+|\cdot|)^\alpha H_c(\cdot)\|_{L^p(\R)}<\infty$.
	The rest of the proof then follows that of Theorem 3.9 in \cite{Bruell2017sad}, with the obvious modifications.
\end{proof}
With this result it is simple to prove part (i) of Theorem \ref{thm: decay}:

\begin{proof}[Proof of Theorem \ref{thm: decay} (i)]
	 As shown in the proof of Theorem \ref{thm: algebraic decay}, for every $0<\delta<c$ there is a $R_\delta\geq 0$ such that
	\begin{equation*}
		c-\frac{G(u(x))}{u(x)}\geq c-\delta>0 \,\, \text{for all} \,\, |x|\geq R_\delta.
	\end{equation*}
	Pick one such $\delta$ and $R_\delta$. Since $u\in L^\infty(\R)$, we have that $|\cdot|^l u(\cdot)$ is bounded on bounded sets, so it remains only to consider $|x|\geq R_\delta$.
	From \eqref{eq: main eq general} and repeated use of H\"older's inequality, we get
	\begin{align*}
		|x|^l|u(x)|\leq & (c-\delta)^{-1}\int_\R |x-y|^l |H_c(x-y)||G(u(y))|\dy +\int_\R |H_c(x-y)| |y|^l |G(u(y))|\dy \\
		\lesssim & \||\cdot|^l H_c(\cdot)\|_{L^1(\R)}\|G(u)\|_{L^\infty(\R)}+\|u^{r-1}\|_{L^\infty(\R)}\int_\R |H_c(x-y)||y|^l|u(y)|\dy \\
		\lesssim & \||\cdot|^l H_c(\cdot)\|_{L^1(\R)}\|u\|_{L^\infty(\R)}^r+\|u\|_{L^\infty(\R)}^{r-1}\|H_c\|_{L^p(\R)}\||\cdot|^lu(\cdot)\|_{L^q(\R)},
	\end{align*}
	where $\frac{1}{p}+\frac{1}{q}=1$. By Corollary \ref{cor: integrability} the first term in the last line is bounded and we can find $p\in (1,2)$ such that $H_c\in L^p(\R)$. Then $q\in (2,\infty)$ and by Theorem \ref{thm: algebraic decay} the last term is also bounded. The constant implied in the notation $\lesssim$ can be taken independently of $x$, and the conclusion follows.
\end{proof}

\subsection{Exponential decay of solitary waves}
In this section we will add assumption (A1*).

\begin{lemma}
	\label{lem: bound on u convolution}
	Let (A1), (A2), (A3) and (A1*) be satisfied. Suppose that $u\in L^\infty(\R)$ with $\lim_{|x|\rightarrow \infty} u(x)=0$ is a solution to \eqref{eq: main eq general}. Then there exists a constant $C>0$ such that
	\begin{equation*}
		|u(x)|\leq C \int_\R |H_c(x-y)||G(u(y))|\dy 
	\end{equation*}
	for almost every $x\in \R$.
\end{lemma}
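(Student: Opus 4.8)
The idea is to isolate a region near infinity where the nonlinear term can be absorbed, and handle the bounded complement separately. Since $|G(u)| \lesssim |u|^r$ with $r>1$ and $u(x) \to 0$ as $|x|\to\infty$, for any $0 < \delta < c$ there is $R_\delta \geq 0$ with $|G(u(x))| \leq \delta |u(x)|$ for $|x| \geq R_\delta$, hence $c - G(u(x))/u(x) \geq c - \delta > 0$ on that set. From \eqref{eq: main eq general} this gives, for $|x| \geq R_\delta$,
\begin{equation*}
	|u(x)| \leq \frac{1}{c-\delta} \int_\R |H_c(x-y)|\,|G(u(y))|\dy,
\end{equation*}
which is the claimed inequality (with constant $(c-\delta)^{-1}$) on $\{|x|\geq R_\delta\}$.

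It then remains to extend the estimate to the compact set $\{|x| \leq R_\delta\}$, possibly at the cost of enlarging the constant $C$. Here the point is that the right-hand side $\Phi(x) := \int_\R |H_c(x-y)|\,|G(u(y))|\dy$ is a continuous, strictly positive function on $\R$: positivity holds because $u$ is non-trivial, so $G(u)$ is not identically zero on a set of positive measure (using (A3) and $r>1$, $G(u)$ cannot vanish identically where $u$ does not), and $|H_c| > 0$ a.e. (from the explicit formula in Lemma \ref{lem: exponential decay kernel L2}, $H_c$ has only isolated zeros); continuity follows since $H_c \in L^1(\R)$ (Corollary \ref{cor: integrability} with $\alpha = 0$) and $G(u) \in L^\infty(\R)$, so $\Phi = |H_c| * |G(u)|$ is continuous. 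Meanwhile $u$ is bounded. Therefore $\Phi$ attains a positive minimum $m_0 := \min_{|x|\leq R_\delta} \Phi(x) > 0$ on the compact set, and $\sup_{|x|\leq R_\delta} |u(x)| \leq \|u\|_{L^\infty} \leq (\|u\|_{L^\infty}/m_0)\,\Phi(x)$ there. Taking $C := \max\{(c-\delta)^{-1},\, \|u\|_{L^\infty}/m_0\}$ yields the inequality for almost every $x \in \R$.

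The main obstacle is the argument that $\Phi$ is bounded away from zero on the compact set, which hinges on $|H_c| * |G(u)|$ being genuinely positive rather than merely non-negative. This requires both that $|G(u)|$ be supported on a set of positive measure — clear since $u \not\equiv 0$ and, on the set where $u \neq 0$, smallness of $u$ forces $|G(u)| \lesssim |u|^r$ but this does not force $G(u)=0$; one should note $G$ need not be positive, but $|G(u)|$ is positive wherever $G(u)\neq 0$, and if $G(u) \equiv 0$ then \eqref{eq: main eq general} would give $u(c - G(u)/u) \equiv 0$, forcing $u \equiv 0$, a contradiction — and that $H_c$ does not vanish on a set of positive measure, which is immediate from its representation as $\mathrm{e}^{-\delta_c|x|}(v + P_n(|x|))$ away from the origin plus the behaviour at the origin in Lemma \ref{lem: behaviour at 0}. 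Once positivity and continuity of $\Phi$ are in hand, the compactness argument is routine.
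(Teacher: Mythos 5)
Your overall strategy is the same as the paper's: establish the bound with constant $(c-\delta)^{-1}$ where the coefficient $c-\frac{G(u(x))}{u(x)}$ is bounded away from zero, and then use a compactness/positivity argument for the convolution on the remaining bounded set. The difference lies in how you justify that the convolution $\Phi(x)=\int_\R|H_c(x-y)||G(u(y))|\,\mathrm{d}y$ is bounded below on that bounded set, and this is where there is a genuine gap. You claim that $|H_c|>0$ almost everywhere ("$H_c$ has only isolated zeros"), citing Lemma \ref{lem: exponential decay kernel L2}. That lemma only gives $H_c(x)=\mathrm{e}^{-\delta_c|x|}(v(x)+P_n(|x|))$ with $v$ merely in $L^p$ away from the origin; nothing there (or in Lemma \ref{lem: behaviour at 0}, which controls $H_c$ only near $x=0$) prevents $v+P_n$ from vanishing on a set of positive measure, and away from the origin $H_c$ is only known to be smooth, not analytic, so its zero set could a priori be large. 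Since your argument needs $\Phi(x)>0$ for \emph{every} $x$ in the ball $\{|x|\leq R_\delta\}$ --- including points where $G(u)$ may vanish on a whole neighbourhood of $x$, so that only the far-field values of $H_c$ contribute --- the unsupported claim about the zero set of $H_c$ is load-bearing and cannot simply be dropped.

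The paper avoids this by never needing positivity of $\Phi$ on the whole ball. It observes that the coefficient bound $\left|c-\frac{G(u(x))}{u(x)}\right|\geq\gamma$ holds everywhere outside the set $E_\gamma=\{x:\left|c-\frac{G(u(x))}{u(x)}\right|<\gamma\}$, not merely for $|x|\geq R_\gamma$, so the trivial bound $|u(x)|\leq\|u\|_{L^\infty}$ only has to be paired with a lower bound on $\Phi$ for $x\in E_\gamma$. On $E_\gamma$ one has $|u(x)|$ bounded below (since $\frac{G(u(x))}{u(x)}$ is close to $c$ and $|G(u)|\lesssim|u|^r$ with $r>1$), and via the continuity of $cu-G(u)=L(u)$ one gets that $G(u)$ is non-zero on an interval around each $x\in E_\gamma$; this is then paired with the non-vanishing of $H_c$ \emph{near the origin only}, which is actually established in Lemma \ref{lem: behaviour at 0}. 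To repair your argument, either restrict your compactness step to $E_\gamma$ and supply the local non-vanishing of $G(u)$ there as the paper does, or else prove the a.e.\ non-vanishing of $H_c$, which does not follow from the results you cite.
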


\begin{proof}
	From \eqref{eq: main eq general} we get that
	\begin{align*}
		|u(x)|\left|c-\frac{G(u(x))}{u(x)}\right|=\left|\int_\R H_c(x-y)G(u(y))\dy\right|\leq \int_\R |H_c(x-y)||G(u(y))|\dy.
	\end{align*}
	If $\left|c-\frac{G(u(x))}{u(x)}\right|\geq \gamma>0$, then
	\begin{equation*}
		|u(x)|\leq \gamma^{-1}\int_\R |H_c(x-y)||G(u(y))|\dy.
	\end{equation*}
	As shown in the proof of Theorem \ref{thm: algebraic decay}, for any $\gamma\in (0,c)$, there exists an $R_\gamma>0$ such that $\left|c-\frac{G(u(x))}{u(x)}\right|\geq \gamma>0$ for all $|x|\geq R_\gamma$. It follows that the set
\begin{equation*}
E_\gamma=\lbrace x\in \R : \left|c-\frac{G(u(x))}{u(x)}\right|< \gamma \rbrace,
\end{equation*}	
	  is contained in bounded interval, and moreover that $\inf_{x\in E_\gamma} |u(x)|\geq C>0$ for some $C>0$. We have that
	  \begin{equation*}
	  |cu(x)-G(u(x))|<\gamma |u(x)|\leq \gamma \|u\|_{L^\infty(\R)}, \,\, x\in E_\gamma.
	  \end{equation*}
	  By \eqref{eq: main}, $cu-G(u)=L(u)\in C(\R)$ (since $L$ is smoothing), and it follows that $G(u(x))$ is non-zero in some interval around $x$ for all $x\in E_\gamma$. As $H_c$ is non-zero around the origin and $E_\gamma$ is a subset of a compact set, it follows that
	\begin{equation*}
		I_\gamma:=\inf \lbrace \int_\R |H_c(x-y)||G(u(y))|\dy : x\in E_\gamma \rbrace >0.
	\end{equation*}
	Hence, for any $\gamma \in (0,c)$, we have that $\max \lbrace \gamma^{-1}, I_\gamma^{-1} \|u\|_{L^\infty(\R)}\rbrace <\infty $ and
	\begin{equation*}
		|u(x)|\leq \max \lbrace \gamma^{-1}, I_\gamma^{-1} \|u\|_{L^\infty(\R)} \rbrace \int_\R |H_c(x-y)||G(u(y))|\dy,
	\end{equation*}
	which guarantees the existence of a $C$ such as in the statement.
\end{proof}
Now we will prove our main result, part (ii) of Theorem \ref{thm: decay}.

\begin{proof}[Proof of Theorem \ref{thm: decay} (ii)]
	First we want to show that
	\begin{equation*}
	\mathrm{e}^{\delta|\cdot|}u(\cdot)\in L^1(\R)\cap L^\infty(\R) \,\, \text{for any}\,\, \delta\in [0,\delta_c).
	\end{equation*}
	The proof of this follows largely the arguments of Corollary 3.1.4 in \cite{Bona1997daa}, with some adaptations (see also Theorem 3.12 in \cite{Bruell2017sad}), but we include the details for completeness.
	If $-\frac{1}{2}\leq m_0<0$, let $1<p<\frac{1}{1+m_0}$; otherwise let $1<p<2$. Let $q$ be the H\"older conjugate of $p$ and let $\delta \in (0,\delta_c)$. Let $M_1$ be the smallest constant such that
	\begin{equation}
		\label{eq: M_1}
		|u(x)|\leq M_1 \int_\R |H_c(x-y)||G(u(y))|\dy \,\, \text{for all} \,\, x\in \R,
	\end{equation}
	and set
	\begin{align*}
		M_2 & =\|(\cdot)G(u)u^{-1}\|_{L^\infty(\R)} \\
		M_3 & =\|\mathrm{e}^{\delta|\cdot|}H_c(\cdot)\|_{L^p(\R)}.
	\end{align*}
	The boundedness of $M_1$ and $M_2$ follows from Lemma \ref{lem: bound on u convolution} and Theorem \ref{thm: decay} (i), respectively, and $M_3$ is bounded by Corollary \ref{cor: Lp kernel}. 
	Let
	\begin{equation*}
		D:=\max\lbrace 1, \frac{\delta}{2}\|u\|_{L^1(\R)}, M_1 M_2 M_3 \delta^{1/p}\left(\frac{2}{q}\right)^{1/q}\rbrace.
	\end{equation*}
	We claim that
	\begin{equation}
	\label{eq: claim}
		\|(\cdot)^l u(\cdot)\|_{L^1(\R)}\leq \frac{(l+2)!D^{l+1}}{\delta^{l+1}}, \,\, \text{for all} \,\, l\in \N.
	\end{equation}
	Clearly it is true for $l=0$. Assume it is true for $l=1,2, ... n$.
	Recall the following identity that can be proved by induction:
	\begin{equation*}
		x^n (f\ast g)(x)=\sum_{j=0}^{n}\binom{n}{j}\left((\cdot)^{n-j}f\ast (\cdot)^j g\right)(x).
	\end{equation*}
	Using this identity, Young's inequality and \eqref{eq: M_1}, we find that
	\begin{align*}
		\|(\cdot)^{n+1}u(\cdot)\|_{L^1(\R)}\leq & M_1\|(\cdot)^{n+1}(H_c\ast G(u))(\cdot)\|_{L^1(\R)} \\
		\leq & M_1\sum_{j=0}^{n+1}\binom{n+1}{j}\|(\cdot)^{n+1-j}H_c(\cdot)\|_{L^1(\R)}\|(\cdot)^{j}G(u(\cdot))\|_{L^1(\R)}.
	\end{align*}
	 Considering the term involving $H_c$ first, we get by H\"older's inequality:
	\begin{align*}
		\int_\R |x^{n+1-j}H_c(x)|\dx &\leq \int_\R |x^{n+1-j}\mathrm{e}^{-\delta|x|}||\mathrm{e}^{\delta|x|}H_c(x)|\dx \\
		& \leq \left(\int_\R |\mathrm{e}^{\delta|x|}H_c(x)|^p\dx\right)^{1/p}\left(\int_\R |x|^{q(n+1-j)} \mathrm{e}^{-q\delta|x|}\dx \right)^{1/q} \\
		& =M_3 2^{1/q}\left(\int_0^\infty x^{q(n+1-j)} \mathrm{e}^{-q\delta|x|}\dx \right)^{1/q} \\
		& = M_3 2^{1/q} \left(\frac{(q(n+1-j))!}{(q\delta)^{q(n+1-j)+1}}\right)^{1/q} \\
		& \leq M_3 \left(\frac{2}{q}\right)^{1/q}\frac{(n+1-j)!}{\delta^{n+1-j+1/q}}.
	\end{align*}
	And for the term involving $G$, we have that for $1\leq j\leq n+1$,
	\begin{align*}
		\|(\cdot)^j G(u(\cdot))\|_{L^1(\R)} & =\int_\R |x|^j \frac{G(u(x))}{u(x)} u(x)\dx \\
		& \leq M_2 \|(\cdot)^{j-1}u(\cdot)\|_{L^1(\R)} \\
		& \leq M_2 \frac{(j+1)!D^j}{\delta^j}.
	\end{align*}
	Thus we get that
	\begin{align*}
		\|(\cdot)^{n+1}u(\cdot)\|_{L^1(\R)} & \leq M_1 M_2 M_3 \left(\frac{2}{q}\right)^{1/q} \sum_{j=0}^{n+1}\binom{n+1}{j} \frac{(n+1-j)!(j+1)! D^j}{\delta^{n+1+1/q}} \\
		& =M_1 M_2 M_3 \delta^{1/p}\left(\frac{2}{q}\right)^{1/q} \sum_{j=0}^{n+1} \frac{(n+1)!(j+1) D^j}{\delta^{n+1+1/q+1/p}} \\
		& \leq \sum_{j=0}^{n+1} \frac{(n+1)! (j+1)D^{j+1}}{\delta^{n+2}} \\
		& = \frac{(n+3)!D^{n+2}}{\delta^{n+2}},
	\end{align*}
	which proves the claim. Applying \eqref{eq: claim},
	\begin{align*}
		\int_\R \mathrm{e}^{\nu|x|} |u(x)|\dx \leq & \sum_{l=0}^{\infty}\frac{\nu^l}{l!}\int_\R |x|^l|u(x)|\dx \\
		\leq & \sum_{l=0}^{\infty}\frac{\nu^l}{l!}\frac{(l+2)!D^{l+1}}{\delta^{l+1}} \\
		\leq & \sum_{l=0}^{\infty}\frac{\nu^l (l+2)(l+1)D^{l+1}}{\delta^{l+1}}.
	\end{align*}
	Hence the integral converges if $0<\nu<\frac{\delta}{D}$, and it follows that $\mathrm{e}^{\nu|\cdot|}u(\cdot)\in L^1(\R)\cap L^\infty(\R)$ for some $0<\nu<\delta$. Let $\eta=\sup \lbrace \nu : \mathrm{e}^{\nu|\cdot|}u(\cdot)\in L^1(\R)\cap L^\infty(\R)\rbrace$. Assume $\eta<\delta$, and choose $\nu$ such that
	\begin{equation*}
		\frac{\eta}{r}<\nu<\min\lbrace \eta, \frac{\delta}{r}\rbrace.
	\end{equation*}
	We have that
	\begin{align*}
		|u(x)|\mathrm{e}^{r\nu |x|}\leq & M_1 \int_\R |H_c(x-y)|\mathrm{e}^{r\nu|x-y|} |G(u)|\mathrm{e}^{r\nu|y|}\dy \\
		\lesssim & M_1 \int_\R |H_c(x-y)|\mathrm{e}^{2\nu|x-y|} \left(u(y)\mathrm{e}^{\nu|y|}\right)^r\dy \\
		= & M_1 \left(H_c(\cdot)\mathrm{e}^{r\nu|\cdot|}\ast \left(u(\cdot)\mathrm{e}^{\nu|\cdot|}\right)^r\right)(x).
	\end{align*}
	By Young's inequality, we get
	\begin{equation*}
		\|\mathrm{e}^{r\nu |\cdot|}u(\cdot)\|_{L^1(\R)}\leq M_1 \|H_c(\cdot)\mathrm{e}^{r\nu|\cdot|}\|_{L^1(\R)} \|\left(u(\cdot)\mathrm{e}^{\nu|\cdot|}\right)^r\|_{L^1(\R)}<\infty,
	\end{equation*}
	and
	\begin{equation*}
		\|\mathrm{e}^{r\nu |\cdot|}u(\cdot)\|_{L^\infty(\R)}\leq M_1 \|H_c(\cdot)\mathrm{e}^{r\nu|\cdot|}\|_{L^1(\R)} \|\left(u(\cdot)\mathrm{e}^{\nu|\cdot|}\right)^r\|_{L^\infty(\R)}<\infty.
	\end{equation*}
	But as $r>1$ we have that $r\nu>\eta$, and this contradicts the definition of $\eta$. Hence the assumption that $\eta<\delta$ must be false, and it must be the case that $\eta\geq \delta$. As $\delta\in (0,\delta_c)$ was arbitrary, this shows that 
	\begin{equation*}
	\mathrm{e}^{\delta|\cdot|}u(\cdot)\in L^1(\R)\cap L^\infty(\R) \,\, \text{for any}\,\, \delta\in [0,\delta_c).
	\end{equation*}
	Assume now that $n=0$ in \ref{lem: exponential decay kernel L2}, so that $H_c(x)=\mathrm{e}^{-\delta_c|x|}(v+C)$ for some constant $C\neq 0$, and let $f=v+C$; then $f\in L_{loc}^1(\R)$ and $f$ is bounded for $|x|>1$. We have that
	\begin{equation*}
		\mathrm{e}^{\delta_c|x|}|u(x)|\lesssim \int_\R |H_c(x-y)|\mathrm{e}^{\delta_c|x-y|} G(u(y))\mathrm{e}^{\delta_c |y|}\dy \simeq  \int_\R |f(x-y)|\left(u(y)\mathrm{e}^{\frac{\delta_c}{r}|y|}\right)^r\dy.
	\end{equation*}
	Splitting the integral into the integral over $|x-y|<1$, and $|x-y|\geq 1$ and applying H\"older's inequality, we get
	\begin{align*}
		\mathrm{e}^{\delta_c|x|}|u(x)| \lesssim & \|f\|_{L^1((-1,1))}\|u(\cdot)\mathrm{e}^{\frac{\delta_c}{r}|\cdot|}\|_{L^\infty(\R)}^r \\
		& + \|f\|_{L^\infty(\R\setminus(-1,1))} \|u(\cdot)\mathrm{e}^{\frac{\delta_c}{r}|\cdot|}\|_{L^\infty(\R)}\|u(\cdot)\mathrm{e}^{\frac{\delta_c}{r}|\cdot|}\|_{L^1(\R)}.
	\end{align*}
	The right hand side is finite and independent of $x$, hence we conclude that $\mathrm{e}^{\delta_c|\cdot|}u(\cdot)\in L^\infty(\R)$. Now we want to show that this is optimal. Let $\varepsilon>0$. By the decay of $u$ and assumption (A3), we have that
	\begin{equation*}
		|H_c(x-y)G(u(y))|\lesssim \mathrm{e}^{-\delta_c|x-y|}|f(x-y)|\mathrm{e}^{-r\delta_c|y|}\leq \mathrm{e}^{-\delta_c|x|}|f(x-y)|\mathrm{e}^{-(r-1)\delta_c|y|},
	\end{equation*}
	for all $|y|$ sufficiently large. As $r>1$ and $f\in L_{loc}^1(\R)$, we can find $R_\varepsilon$ such that
	\begin{equation*}
		\left|\int_{|y|>R_\varepsilon} H_c(x-y) G(u(y))\dy\right|<\varepsilon \mathrm{e}^{-\delta_c|x|}.
	\end{equation*}
	Now let $|x|>R_\varepsilon$ be such that $f(x-y)=C+O(\varepsilon)$ for all $|y|\leq R_\varepsilon$. This is possible as $\lim_{|x|\rightarrow \infty} f(x)=C\neq 0$. If $x>R_\varepsilon$, we get that
	\begin{align*}
		\mathrm{e}^{\delta_c|x|}u(x)  \simeq & \mathrm{e}^{\delta_c|x|} \int_\R \mathrm{e}^{-\delta_c|x-y|}f(x-y)G(u(y))\dy \\
		 = &\mathrm{e}^{\delta_c|x|} \int_{|y|\leq R_\varepsilon}\mathrm{e}^{-\delta_c|x-y|}f(x-y)G(u(y))\dy \\
		& +\mathrm{e}^{\delta_c|x|}\int_{|y|> R_\varepsilon}\mathrm{e}^{-\delta_c|x-y|}f(x-y)G(u(y))\dy \\
		= & C\int_{|y|\leq R_\varepsilon} \mathrm{e}^{\delta_c y}G(u(y))\dy +O(\varepsilon),
	\end{align*}
	and if $x<-R_\varepsilon$, we get
	\begin{equation*}
		\mathrm{e}^{\delta_c|x|}u(x)  \simeq C\int_{|y|\leq R_\varepsilon} \mathrm{e}^{-\delta_c y}G(u(y))\dy +O(\varepsilon)
	\end{equation*}
	As $G(u)$ is non-zero on a set of non-zero measure,
\begin{equation*}
\int_{|y|\leq R_\varepsilon} \mathrm{e}^{-\delta_c y}G(u(y))\dy \quad \text{and} \quad \int_{|y|\leq R_\varepsilon} \mathrm{e}^{\delta_c y}G(u(y))\dy
\end{equation*}	
cannot both converge to $0$ as $\varepsilon\rightarrow 0^+$. This shows that $\mathrm{e}^{\delta_c|x|}u(x)$ does not decay to $0$ as $|x|\rightarrow \infty$, and it also implies that $\mathrm{e}^{\delta_c|\cdot|}u(\cdot)\in L^p(\R)$ only for $p=\infty$. This was for $n=0$; by the same arguments we see that if $n>0$, then $\mathrm{e}^{\delta_c|\cdot|}|u(\cdot)|$ has algebraic growth of order $n$.
\end{proof}

\subsection{When $L$ is a differentiating operator}
\label{subsec: diff operator}
Assumption (A1) implies that $L$ is a smoothing operator, and the dispersion in \eqref{eq: main} is very weak. However, our results can easily be extended to the case with stronger dispersion as well, by making a few observations. As shown in the introduction, \eqref{eq: main} can formally be written as
\begin{equation*}
u=\F^{-1}\left(\frac{1}{c-m}\right)\ast G(u).
\end{equation*}
If $m>0 $ and $m(\xi)\rightarrow \infty$ as $\xi\rightarrow \infty$, then $\tilde{m}=\frac{1}{m}$ is bounded and $\lim_{\xi\rightarrow \pm \infty}\tilde{m}(\xi)=0$. Moreover, 
\begin{equation*}
\frac{1}{c-m}=\frac{1}{m}\frac{1}{c/m-1}=-\frac{1}{c}\frac{\tilde{m}}{\frac{1}{c}-\tilde{m}}.
\end{equation*}
Hence, letting $H_c$ be defined by \eqref{eq: H_c} as in Sections \ref{sec: H_c} and \ref{sec: decay}, with $\tilde{m}$ in place of $m$, that is, $H_c=\F^{-1}\left(\frac{\tilde{m}}{c-\tilde{m}}\right)$, we get that \eqref{eq: main} can be written as
\begin{equation}
cu= -H_{1/c}\ast G(u).
\end{equation}
Note that $c$ is just a constant and the minus sign makes no difference to our results as we have no assumptions on the sign of $G$. Hence this equation is even simpler than \eqref{eq: main eq general}, as we do not have the term $c-\frac{G(u)}{u}$ on the left-hand side, and all our results are therefore valid if assumptions (A1), (A2) and (A3) (and (A1*)) are satisfied for $\tilde{m}$, $\frac{1}{c}$ and $G$. We summarize the results in the following theorem:

\begin{theorem}
	\label{thm: diff operator}
	Let (A3) be satisfied, $m\colon \R \rightarrow \R$ be even and strictly positive, and such that $m^{-1}$ satisfies (A1), and let $0<c<\min_{\xi\in\R}m(\xi)$. Suppose that $u\in L^\infty(\R)$ with $\lim_{|x|\rightarrow \infty}u(x)=0$ is a non-trivial solution to \eqref{eq: main}. Then
	\begin{equation*}
	|\cdot|^l u(\cdot)\in L^\infty(\R),
	\end{equation*}
	for any $l\geq 0$.
	If $m^{-1}$ satisfies (A1*) in addition, then 
	\begin{equation*}
	\mathrm{e}^{\delta_{1/c}|\cdot|}u(\cdot)\in L^p(\R), \,\, \text{if and only if} \,\, p=\infty,
	\end{equation*}
	where $\delta_{1/c}>0$ is the smallest number for which there exists a $z_0\in \C$ with $\mathrm{Im}\, z_0=\delta_{1/c}$ such that $m(z_0)^{-1}=\frac{1}{c}$.
\end{theorem}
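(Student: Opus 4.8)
The plan is to recognize \eqref{eq: main}, in this setting, as a convolution equation of exactly the type treated in Sections~\ref{sec: H_c} and \ref{sec: decay} with $m$ replaced by $\tilde m := 1/m$ and $c$ replaced by $1/c$, and then to invoke Theorems~\ref{thm: decay} and \ref{thm: algebraic decay}. As already explained in the introduction and at the start of Section~\ref{subsec: diff operator}, since $m$ is even, strictly positive and $m(\xi)\to\infty$, the symbol $\tilde m$ is even, positive, bounded and tends to $0$ at $\pm\infty$, the identity
\[
\frac{1}{c-m}=-\frac1c\cdot\frac{\tilde m}{\tfrac1c-\tilde m}
\]
holds, and \eqref{eq: main} may be rewritten as $cu=-H_{1/c}\ast G(u)$, where $H_{1/c}:=\F^{-1}\bigl(\tfrac{\tilde m}{1/c-\tilde m}\bigr)$ is the kernel built from $(\tilde m,1/c)$. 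Two preliminary checks make this rigorous and put us in the framework of Section~\ref{sec: decay}: first, $(\tilde m,1/c)$ satisfies (A2) because $\max_\R\tilde m=1/\min_\R m<1/c$ is precisely the hypothesis $c<\min_\R m$, while (A1) for $\tilde m$ is assumed; second, by Corollary~\ref{cor: integrability} (with $\alpha=0$, using $\tilde m_0<0$) one has $H_{1/c}\in L^1(\R)$, so that $K_c=\F^{-1}\bigl(\tfrac1{c-m}\bigr)=-\tfrac1c H_{1/c}$ is a genuine $L^1$ function and the convolution equation is classical for $u\in L^\infty(\R)$.

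The equation $cu=-H_{1/c}\ast G(u)$ is of the form $uF(u)=\widetilde H\ast G(u)$ with $F\equiv c$ and $\widetilde H=-H_{1/c}$, which is covered by the extension noted after Theorem~\ref{thm: decay}: $\lim_{x\to0}F(x)=c\neq0$, $|\widetilde H|=|H_{1/c}|$, and the conclusion of Lemma~\ref{lem: bound on u convolution} is trivial here since $|u(x)|\le c^{-1}\int_\R|H_{1/c}(x-y)|\,|G(u(y))|\dy$ directly from the equation (no exceptional set $E_\gamma$ is needed). Hence, with (A1) and (A2) for $(\tilde m,1/c)$ and (A3) for $G$, Theorem~\ref{thm: decay}(i) applies verbatim and gives $|\cdot|^l u(\cdot)\in L^\infty(\R)$ for every $l\ge0$.

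For the exponential statement I would add (A1*) for $\tilde m$ and rerun the proofs of Lemma~\ref{lem: exponential decay kernel L2} and of Theorem~\ref{thm: decay}(ii) with $(\tilde m,1/c)$ in place of $(m,c)$ and with the factor $c-G(u)/u$ replaced throughout by the constant $c$. The one genuinely new wrinkle is that $\tilde m=1/m$ is merely meromorphic, not analytic, on its strip of analyticity: beyond the uniform radius of convergence its possible singularities are the zeros of $m$ (poles of $\tilde m$) and the branch points of $m$. But $\tfrac{\tilde m}{1/c-\tilde m}=-\tfrac{c}{c-m}$ tends to $-1$ at a zero of $m$, so it stays bounded there and $g:=\bigl(\tfrac{\tilde m}{1/c-\tilde m}\bigr)'$ has a removable singularity at every such point; this is exactly the isolated-singularity case already dispatched in the proof of Lemma~\ref{lem: exponential decay kernel L2}, and branch points are handled there as well. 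Thus that lemma applies and gives $H_{1/c}(x)=\mathrm{e}^{-\delta_{1/c}|x|}(v+P_n(|x|))$, with $\delta_{1/c}$ the least height of a complex point $z_0$ satisfying $\tilde m(z_0)=1/c$, i.e. $m(z_0)=c$, and $n$ the order of the zero of $\tilde m'$ at $z_0$ — which, since $m(z_0)=c\neq0$ and $\tilde m'=-m'/m^2$, coincides with the order of the zero of $m'$ at $z_0$. Feeding this into the proof of Theorem~\ref{thm: decay}(ii) yields $\mathrm{e}^{\delta|\cdot|}u(\cdot)\in L^1\cap L^\infty(\R)$ for all $\delta<\delta_{1/c}$, $\mathrm{e}^{\delta_{1/c}|\cdot|}u(\cdot)\notin L^p(\R)$ for any $p<\infty$, and $\mathrm{e}^{\delta_{1/c}|\cdot|}u(\cdot)\in L^\infty(\R)$ if and only if $n=0$; in the generic situation $m'(z_0)\neq0$ this is the claimed equivalence $\mathrm{e}^{\delta_{1/c}|\cdot|}u(\cdot)\in L^p(\R)\iff p=\infty$.

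There is no deep difficulty: the substance is the bookkeeping of the reduction, and the one place I would write out in full is the verification that the complex-analytic machinery of Lemma~\ref{lem: exponential decay kernel L2} is unaffected by passing from the analytic $\tilde m$ to the meromorphic $m$ on the strip — namely that zeros of $m$ are removable singularities of $g$ and therefore do not obstruct continuing $g$ up to the first height $\delta_{1/c}$ at which $m(z)=c$.
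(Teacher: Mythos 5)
Your proposal is correct and follows essentially the same route as the paper, which likewise reduces \eqref{eq: main} to $cu=-H_{1/c}\ast G(u)$ via the identity $\frac{1}{c-m}=-\frac1c\frac{\tilde m}{1/c-\tilde m}$ and then invokes the results of Sections \ref{sec: H_c} and \ref{sec: decay} for $(\tilde m,1/c)$, noting that the absent factor $c-G(u)/u$ only simplifies matters. Your additional check that zeros of $m$ in the strip are removable singularities of $g$ (and your caveat that the stated $L^p$-equivalence is the $n=0$ case of Theorem \ref{thm: decay}(ii)) are worthwhile refinements the paper leaves implicit, but they do not constitute a different approach.
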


\section{Symmetry of solitary waves}
\label{sec: symmetry}
Now we will prove Theorem \ref{thm: symmetry}. The method is based on the method of moving planes and is an adaption of the proof the same result for solitary waves to the Whitham equation done in \cite{Bruell2017sad}.

Essential to the proof of symmetry is the following "touching" lemma:
\begin{lemma}
	\label{lem: touching}
	Let $H_c$ be as in Theorem \ref{thm: symmetry}, and let $u\in L^\infty(\R)$ with $\lim_{|x|\rightarrow \infty} u(x)=0$ be a solution to \eqref{eq: main eq general} and assume that $G$ is non-negative and increasing on the range of $u$. Denote by $u_\lambda (\cdot):=u(2\lambda-\cdot)$ the reflection of $u$ about $\lambda\in \R$. If $u\geq u_\lambda$ on $[\lambda,\infty)$, then either
	\begin{itemize}
		\item $u=u_\lambda$, or
		\item $u>u_\lambda$ and $\frac{G(u)}{u}+\frac{G(u_\lambda)}{u_\lambda}<c$ for all $x>\lambda$.
	\end{itemize}
	That is, if $u\geq u_\lambda$ on $(\lambda,\infty)$, then either they are equal or they do not touch.
\end{lemma}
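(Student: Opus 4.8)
The plan is to argue by the strong maximum principle for the convolution operator with the positive kernel $H_c$. Set $w := u - u_\lambda \geq 0$ on $[\lambda,\infty)$. Evaluating \eqref{eq: main eq general} at a point $x$ and at its reflection $2\lambda-x$, and using that $H_c$ is even, I would subtract the two identities to obtain a representation of the form
\begin{equation*}
u(x)\Bigl(c - \tfrac{G(u(x))}{u(x)}\Bigr) - u_\lambda(x)\Bigl(c - \tfrac{G(u_\lambda(x))}{u_\lambda(x)}\Bigr)
= \int_\R \bigl(H_c(x-y) - H_c(2\lambda - x - y)\bigr) G(u(y))\dy.
\end{equation*}
The left-hand side equals $c\,w(x) - \bigl(G(u(x)) - G(u_\lambda(x))\bigr)$. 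On the right-hand side I would fold the integral over the half-line $y\geq\lambda$: substituting $y\mapsto 2\lambda-y$ in the part over $y<\lambda$ and using symmetry of $H_c$, the kernel difference $H_c(x-y)-H_c(2\lambda-x-y)$ combines into an expression that, because $H_c$ is symmetric and strictly decreasing on $(0,\infty)$, is \emph{strictly positive} for $x>\lambda$ and $y>\lambda$ (the point $x$ is strictly closer to $y$ than its reflection is), and it multiplies $G(u(y)) - G(u_\lambda(y))$, which is $\geq 0$ since $G$ is increasing and $w\geq 0$ on $[\lambda,\infty)$.

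So I arrive at an inequality of the shape
\begin{equation*}
c\,w(x) - \bigl(G(u(x)) - G(u_\lambda(x))\bigr) = \int_\lambda^\infty \Phi(x,y)\bigl(G(u(y)) - G(u_\lambda(y))\bigr)\dy,
\end{equation*}
with $\Phi(x,y)>0$ for $x,y>\lambda$, $\Phi\geq 0$ in general, and the right-hand side $\geq 0$. Dividing the left side appropriately: since $G$ is increasing and non-negative on the range of $u$, and writing $G(u(x)) - G(u_\lambda(x)) = \bigl(\tfrac{G(u(x))}{u(x)}\,u(x) - \tfrac{G(u_\lambda(x))}{u_\lambda(x)}\,u_\lambda(x)\bigr)$ when both values are nonzero, I would bound $G(u(x)) - G(u_\lambda(x)) \leq \bigl(\tfrac{G(u(x))}{u(x)} + \tfrac{G(u_\lambda(x))}{u_\lambda(x)}\bigr) w(x)$ — this is an elementary inequality for non-negative increasing $G$ with $G(0)=0$, valid on the range of $u$ — so that
\begin{equation*}
\Bigl(c - \tfrac{G(u(x))}{u(x)} - \tfrac{G(u_\lambda(x))}{u_\lambda(x)}\Bigr) w(x) \leq \int_\lambda^\infty \Phi(x,y)\bigl(G(u(y)) - G(u_\lambda(y))\bigr)\dy.
\end{equation*}
Now suppose $w(x_0) = 0$ at some $x_0>\lambda$ (so $u$ and $u_\lambda$ "touch"). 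The left-hand side vanishes, so the right-hand side, an integral of a product of non-negative terms with $\Phi(x_0,\cdot)>0$ on $(\lambda,\infty)$, must be zero; hence $G(u(y)) = G(u_\lambda(y))$ for a.e. $y>\lambda$. Since $G$ is increasing (strictly, or at least enough to conclude — here I would invoke that $G$ is increasing on the range of $u$ together with continuity of $u$ and the structure, exactly as in \cite{Bruell2017sad}), this forces $w\equiv 0$ on $[\lambda,\infty)$, and by reflection $u = u_\lambda$ everywhere. Conversely, if $u\not\equiv u_\lambda$, then $w>0$ on all of $(\lambda,\infty)$ and the displayed inequality with a strictly positive right-hand side forces $c - \tfrac{G(u)}{u} - \tfrac{G(u_\lambda)}{u_\lambda} < 0$... wait, it forces the left coefficient to be positive; re-examining, since the right side is strictly positive and $w(x)>0$, the coefficient $c - \tfrac{G(u(x))}{u(x)} - \tfrac{G(u_\lambda(x))}{u_\lambda(x)}$ could a priori have either sign — but combined with the Lipschitz hypothesis $|G(x)-G(y)|\leq \tilde c |x-y|$ with $\tilde c < c$ (so $\tfrac{G(u)}{u} < c$ etc. are controlled) one gets the claimed strict inequality $\tfrac{G(u)}{u} + \tfrac{G(u_\lambda)}{u_\lambda} < c$ on $(\lambda,\infty)$, i.e.\ the graphs do not touch.

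The main obstacle I anticipate is making the dichotomy rigorous at the level of the non-local equation: specifically, showing that $w(x_0)=0$ at a single point really does propagate to $w\equiv 0$ on the whole half-line. This is the non-local analogue of the strong maximum principle and hinges on the kernel $H_c$ being \emph{strictly} positive near the origin (so that $\Phi(x_0,y)>0$ for $y$ in a neighbourhood, forcing $G(u)=G(u_\lambda)$ there) combined with $G$ being genuinely increasing and $u$ continuous; one then has to run a connectedness/continuation argument along $(\lambda,\infty)$. I would handle the boundary/degenerate cases (where $u(x)$ or $u_\lambda(x)$ vanishes, so $\tfrac{G(u)}{u}$ is interpreted as its limit) by appealing to assumption (A3), which gives $G(u)/u\to 0$ as $u\to 0$, so those quotients are continuous and bounded on the range of $u$. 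The rest — the folding of the integral, the elementary inequality for $G$, and the sign of the kernel difference — is routine given the hypotheses on $H_c$ in Theorem \ref{thm: symmetry}.
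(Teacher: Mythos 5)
Your overall strategy is the same as the paper's: subtract the equation for $u$ from the one for $u_\lambda$, fold the convolution onto the half-line $(\lambda,\infty)$ so the kernel becomes $\Phi(x,y)=H_c(x-y)-H_c(x+y-2\lambda)$, and use the symmetry and monotonicity of $H_c$ together with $G(u)-G(u_\lambda)\ge 0$ to get strict positivity unless the integrand vanishes. The genuine gap is in the step you call an ``elementary inequality'': for non-negative increasing $G$ with $G(0)=0$ it is \emph{not} true in general that
\begin{equation*}
G(a)-G(b)\le\Bigl(\tfrac{G(a)}{a}+\tfrac{G(b)}{b}\Bigr)(a-b),\qquad a\ge b> 0.
\end{equation*}
Rearranging shows this is equivalent to $G(a)/a^2\le G(b)/b^2$, i.e.\ to $x\mapsto G(x)/x^2$ being non-increasing; it already fails for $G(x)=x^3$ (take $a=1$, $b=1/2$: $7/8>5/8$). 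This is precisely the extra structural condition isolated in the second bullet of Remark \ref{rem: assumptions}, so it cannot be dismissed as elementary. Your fallback via the Lipschitz bound does not repair the second conclusion of the lemma either: $|G(x)-G(0)|\le\tilde c|x|$ gives $\tfrac{G(u)}{u}+\tfrac{G(u_\lambda)}{u_\lambda}\le 2\tilde c$, which need not be below $c$, and in any case the Lipschitz hypothesis belongs to Theorem \ref{thm: symmetry}, not to this lemma. (To be fair, the paper's own proof simply writes $c(u-u_\lambda)-(G(u)-G(u_\lambda))$ as the product $(u-u_\lambda)\bigl(c-\tfrac{G(u)}{u}-\tfrac{G(u_\lambda)}{u_\lambda}\bigr)$, which carries the same hidden assumption; but as a self-contained argument your step does not close.)

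A second, smaller point: in the touching argument you deduce from $w(x_0)=0$ that the right-hand side of your \emph{inequality} must vanish, but $0\le\int_\lambda^\infty\Phi(x_0,y)(G(u(y))-G(u_\lambda(y)))\dy$ gives no information. You should instead use the exact identity you derived two displays earlier: $c\,w(x_0)-(G(u(x_0))-G(u_\lambda(x_0)))=\int_\lambda^\infty\Phi(x_0,y)(G(u(y))-G(u_\lambda(y)))\dy$. Since $w(x_0)=0$ forces $G(u(x_0))=G(u_\lambda(x_0))$, the left side is exactly zero, the non-negative integrand vanishes a.e., hence $H_c\ast G(u)=H_c\ast G(u_\lambda)$, and feeding this back into \eqref{eq: main eq general} gives $cu=cu_\lambda$ directly; no continuation or connectedness argument is needed. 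The first conclusion ($u>u_\lambda$ on $(\lambda,\infty)$ unless $u=u_\lambda$) likewise follows from $c\,w\ge H_c\ast(G(u)-G(u_\lambda))>0$ without ever invoking the problematic inequality.
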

This lemma is essentially corollary 4.2 in \cite{Bruell2017sad} for a general class of equations and can be proved in a similar manner. For completeness we include the proof; some of the arguments will also be useful later.

\begin{proof}
	Let $f\geq 0$ on $[\lambda,\infty)$ be odd about $\lambda$, that is, $f(x)=-f(2\lambda-x)$, and let $x\geq \lambda$. A simple change of variables and that $f$ is odd with respect to $\lambda$ gives that
	\begin{align*}
	H_c\ast f(x)=& \int_\lambda^\infty H_c(x-y)f(y)\dy+\int_{-\infty}^\lambda H_c(x-y)f(y)\dy \\
	= & \int_\lambda^\infty H_c(x-y)f(y)\dy+\int_\lambda^\infty H_c(x+y-2\lambda)f(2\lambda -y)\dy \\
	= & \int_\lambda^\infty \left(H_c(x-y)-H_c(x+y-2\lambda)\right)f(y)\dy.
	\end{align*}
	As $H_c$ is symmetric and monotonically decreasing on $(0,\infty)$, and $f\geq 0$ on $[\lambda,\infty)$, we conclude that
	\begin{equation*}
	H_c\ast f(x)\geq 0 \,\, \text{for all} \,\, x\geq \lambda,
	\end{equation*}
	with equality if and only if $f=0$ on $(\lambda,\infty)$. By the definition of $u_\lambda$, $G(u)-G(u_\lambda)$ is odd about $\lambda$, and as $u(x)\geq u_l(x)$ for $x\geq \lambda$, it follows from the assumption that $G$ is increasing on the range of $u$ that $G(u)-G(u_\lambda)\geq 0$ for $x\geq \lambda$. Hence $G$ satisfies the same properties as $f$, and by the symmetry of $H_c$ we have that $u_\lambda$ is also a solution to \eqref{eq: main eq general}. We therefore conclude that
	\begin{equation*}
	(u-u_\lambda)\left(c-\frac{G(u)}{u}-\frac{G(u_\lambda)}{u_\lambda}\right)=H_c\ast\left(G(u)-G(u_\lambda)\right)>0
	\end{equation*}
	for all $x>\lambda$ unless $u=u_\lambda$.
\end{proof}

With this result we can prove Theorem \ref{thm: symmetry}:

\begin{proof}[Proof of Theorem \ref{thm: symmetry}]
	Following \cite{Chen2006cos}, we define
	\begin{equation*}
	\Sigma_\lambda:=\lbrace x\in \R : x>\lambda \rbrace
	\end{equation*}
	and
	\begin{equation*}
	\Sigma_\lambda^- :=\lbrace x\in \Sigma_\lambda : u(x)<u_\lambda(x)\rbrace.
	\end{equation*}
	The first step is to show that there is a $\lambda$ far enough to the left such that the open set $\Sigma_\lambda^-$ is empty. A straightforward calculation similar to the one in Lemma \ref{lem: touching} gives that
	\begin{align*}
	c& (u(x)-u_\lambda(x)) \\
	&=\int_{\Sigma_\lambda} \left(H_c(x-y)-H_c(x+y-2\lambda)\right)\left( G(u(y))-G(u_\lambda(y))\right)\dy + G(u(x))-G(u_\lambda(x)).
	\end{align*}
	Let $x\in \Sigma_\lambda^-$ and let $r>1$ be as in assumption (A3). Then
	\begin{align*}
	0<&c(u_\lambda(x)-u(x)) \\
	\leq & \int_{\Sigma_\lambda^-} \left(H_c(x-y)-H_c(2\lambda-x-y)\right)\left( G(u_\lambda(y))-G(u(y))\right)\dy + G(u_\lambda(x))-G(u(x)) \\
	\leq & \int_{\Sigma_\lambda^-}H_c(x-y)u_\lambda(y)^{r-1}\left( \frac{G(u_\lambda(y))}{u_\lambda(y)^{r-1}}-\frac{G(u(y))}{u(y)^{r-1}}\right)\dy + G(u_\lambda(x))-G(u(x)).
	\end{align*}
	By H\"older's inequality we get that
	\begin{equation}
	\label{eq: symmetry eq}
	\|u_\lambda-u\|_{L^\infty(\Sigma_\lambda^-)}\leq \frac{1}{c}\|u_\lambda\|_{L^\infty(\Sigma_\lambda^-)}^{r-1}\left(\|H_c\|_{L^1(\R)}+1\right)\|\frac{G(u_\lambda)}{u_\lambda^{r-1}}-\frac{G(u)}{u^{r-1}}\|_{L^\infty(\Sigma_\lambda^-)}.
	\end{equation}
	Note that every term on the right-hand side is bounded independently of $\lambda$. Moreover, as $u_\lambda(x)=u(2\lambda-x)$ and $u$ is decaying, we get that $\lim_{\lambda\rightarrow -\infty} \|u_\lambda\|_{L^\infty(\Sigma_\lambda^-)}=0$, which implies that $\|u_\lambda-u\|_{L^\infty(\Sigma_\lambda^-)}\rightarrow 0$ as well. Hence, for $\lambda\ll 0$ sufficiently small, assumption (A3) gives that
	\begin{equation*}
	\|\frac{G(u_\lambda)}{u_\lambda^{r-1}}-\frac{G(u)}{u^{r-1}}\|_{L^\infty(\Sigma_\lambda^-)}\leq C\|u_\lambda-u\|_{L^\infty(\Sigma_\lambda^-)},
	\end{equation*}
	for some $C$. As long as $\|u_\lambda-u\|_{L^\infty(\Sigma_\lambda^-)}\neq 0$, we can divide by this term on both sides in \eqref{eq: symmetry eq}, and we see that there must exists an $N\in \R$, such that $\|u_\lambda-u\|_{L^\infty(\Sigma_\lambda^-)}=0$ for all $\lambda\leq-N$. It follows that $\Sigma_\lambda^-$ is empty for all $\lambda\leq -N$, and that $u$ cannot have any crests to the left of $-N$.
	
	The next step now is to move the plane $x=\lambda$ to the right from $\lambda=-N$ until the final point for which $\Sigma_\lambda^-$ is empty. This process will stop at a crest of before. Assume the process stops at a point $\lambda_0$, where $u(x)\geq u_\lambda(x)$, but $u(x)\neq u_{\lambda_0}(x)$ for all $x\in\Sigma_{\lambda_0}$. That is, $u$ is not symmetric about $\lambda_0$. By Lemma \ref{lem: touching}, we get that $u(x)>u_{\lambda_0}(x)$ for all $x\in \Sigma_{\lambda_0}$. As $u$ is continuous, we have that for any $\varepsilon>0$, there is a $\delta>0$ such that $|\overline{\Sigma_\lambda^-}|<\varepsilon$ for all $\lambda\in [\lambda_0,\lambda_0+\delta)$. Let $\lambda>\lambda_0$ with $|\lambda-\lambda_0|$ sufficiently small such that $\Sigma_\lambda^-$ is bounded (by assumption, $\Sigma_\lambda^-$ is non-empty, otherwise the process of the plane would not have stopped at $\lambda_0$). Let $x\in \Sigma_\lambda^-$. By similar calculations as those preceding \eqref{eq: symmetry eq}, we get that
	\begin{align*}
	0<&c(u_\lambda(x)-u(x)) \\
	\leq & \int_{\Sigma_\lambda^-}H_c(x-y)(u_\lambda(y)-u(y))\left( \frac{G(u_\lambda(y))-G(u(y))}{u_\lambda(y)-u(y)}\right)\dy + G(u_\lambda(x))-G(u(x)).
	\end{align*}
	Let $p\in (1,\infty)$. By Young's and H\"older's inequalities, we get that
	\begin{align*}
	c\|u_\lambda-u\|_{L^p(\Sigma_\lambda^-)}\leq & \|H_c\|_{L^s(\R)}\|(u_\lambda-u)\left( \frac{G(u_\lambda)-G(u)}{u_\lambda-u}\right)\|_{L^q(\Sigma_\lambda^-)}\\
	& +\|G(u_\lambda)-G(u)\|_{L^p(\Sigma_\lambda^-)} \\
	\leq & \|H_c\|_{L^s(\R)}\| \frac{G(u_\lambda)-G(u)}{u_\lambda-u}\|_{L^{qp/(p-q)}(\Sigma_\lambda^-)} \|u_\lambda-u\|_{L^p(\Sigma_\lambda^-)}\\
	& + \| \frac{G(u_\lambda)-G(u)}{u_\lambda-u}\|_{L^\infty(\Sigma_\lambda^-)}\|u_\lambda-u\|_{L^p(\Sigma_\lambda^-)},
	\end{align*}
	where $s,q\in [1,\infty)$ are chosen such that $1+\frac{1}{p}=\frac{1}{s}+\frac{1}{q}$. Note that this choice can be made such that $q>p$ and hence $1<\frac{qp}{p-q}<\infty$. Since $\Sigma_\lambda^-$ is assumed to be non-empty, the continuity of $u$ implies that $\|u_\lambda-u\|_{L^p(\Sigma_\lambda^-)}>0$, so we can divide out this term and we get that
	\begin{equation}
	\label{eq: symmetry eq 2}
	c\leq \|H_c\|_{L^s(\R)}\| \frac{G(u_\lambda)-G(u)}{u_\lambda-u}\|_{L^{qp/(p-q)}(\Sigma_\lambda^-)}+\| \frac{G(u_\lambda)-G(u)}{u_\lambda-u}\|_{L^\infty(\Sigma_\lambda^-)}.
	\end{equation}
	As $|\overline{\Sigma_\lambda^-}|\rightarrow 0$ as $\lambda\rightarrow \lambda_0^+$, the first term on the right-hand side can be made arbitrarily small by taking $\lambda>\lambda_0$ close enough to $\lambda_0$. By assumption we have that $G(u_\lambda)-G(u)\leq \tilde{c}(u_\lambda-u)$, so that
	\begin{equation*}
	\| \frac{G(u_\lambda)-G(u)}{u_\lambda-u}\|_{L^\infty(\Sigma_\lambda^-)}\leq \tilde{c}<c.
	\end{equation*}
	We have thus showed that there is a $\delta>0$ such that the right-hand side of \eqref{eq: symmetry eq 2} is less than $c$ for all $\lambda\in [\lambda_0,\lambda_0+\delta)$, which is clearly a contradiction. Hence it must be the case that $\|u_\lambda-u\|_{L^p(\Sigma_\lambda^-)}=0$, which implies that $\Sigma_\lambda^-$ is empty - a contradiction. It follows that the assumption that $u$ is not symmetric about $\lambda_0$ is false and this completes the proof.
\end{proof}

\section{Examples}
\label{sec: examples}
In this section we apply our theory from the preceding sections to some equations of interest, for which the (precise) decay properties have not previously been established.

\subsection*{A Whitham--Boussinesq system}
Let us return to the Whitham--Boussinesq system mentioned in the introduction (cf. \eqref{eq: whitham-boussinesq}). Solitary-wave solutions to this system satisfy (see \eqref{eq: main eq})
\begin{equation*}
u\left(c^2-\frac{G(u)}{u}\right)=H_c\ast G(u),
\end{equation*}
where $G(u)=\frac{u^2}{2}(3c-u)$ and
\begin{equation}
	\label{eq: H_c for whitham-boussinesq}
	H_c=\F^{-1}\left(\frac{m}{c^2-m}\right).
\end{equation}
This is exactly of the form \eqref{eq: main eq general} only with $c$ replaced by $c^2$. Clearly, $G$ satisfies (A3) with $r=2$ and hence, if (A2) is satisfied with $c^2$ in place of $c$, all the results of the previous sections are valid. 
A specific equation of particular interest within this class is when $m$ is the bi-directional Whitham-Kernel:
\begin{equation}
\label{eq: bi-directional whitham}
	m(\xi)=\frac{\tanh(\xi)}{\xi}.
\end{equation}
For this $m$, the theory in the previous sections gives the following result:
\begin{theorem}
	Let $c>1$, $m(\xi)=\frac{\tanh(\xi)}{\xi}$, and $\delta_c\in (0,\frac{\pi}{2})$ satisfy $\frac{\tan(\delta_c)}{\delta_c}=c^2$. Then, for $H_c$ defined as in \eqref{eq: H_c for whitham-boussinesq},
	\begin{equation*}
		H_c(x)=\mathrm{e}^{-\delta_c|x|}\left(v(x)+\sqrt{2\pi}\frac{\tan(\delta_c)\delta_c}{\delta_c\sec^2(\delta_c)-\tan(\delta_c)}\right),
	\end{equation*}
	for some even function $v$ that satisfies $v\in L^p(\lbrace x\in \R : |x|\geq 1\rbrace)$ for all $1\leq p\leq \infty$, and $v(x)\simeq |\ln(|x|)|$ for $|x|\ll 1$.
	
	Moreover, if $u\in L^\infty(\R)$ with $\lim_{|x|\rightarrow \infty} u(x)=0$ is a non-trivial solution to \eqref{eq: main eq}, then
	\begin{equation*}
	u(\cdot)\mathrm{e}^{\delta_c|\cdot|}\in L^p(\R) \,\, \text{if and only if} \,\, p=\infty.
	\end{equation*}
	That is, $u(x)$ decays exactly like $\mathrm{e}^{-\delta_c|x|}$. Furthermore, if $u \colon \R \rightarrow [0,c-\frac{1}{3}]$, then $u$ is symmetric.
\end{theorem}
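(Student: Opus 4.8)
The plan is to obtain all three assertions by specialising the general results of Sections~\ref{sec: H_c}--\ref{sec: symmetry} to $m(\xi)=\tanh(\xi)/\xi$, with the wave speed of the abstract theory played by $c^2$ (cf.\ \eqref{eq: main eq}). Concretely I would carry out four steps: (i) check that $(m,c^2)$ satisfies (A1), (A1*) and (A2); (ii) identify $\delta_c$ and evaluate the data entering Lemma~\ref{lem: exponential decay kernel L2} to get the formula for $H_c$; (iii) read off the sharp exponential decay of $u$ from Theorem~\ref{thm: decay}(ii); (iv) check the hypotheses for symmetry and quote Theorem~\ref{thm: symmetry}.

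For (i): as $|\xi|\to\infty$ one has $\tanh(\xi)/\xi\sim \xi^{-1}$ and $|m^{(n)}(\xi)|\lesssim (1+|\xi|)^{-1-n}$, so (A1) holds with $m_0=-1$ (hence $H_c$ has a logarithmic singularity at the origin by Lemma~\ref{lem: behaviour at 0}). The only complex singularities of $\tanh(z)/z$ are the simple poles of $\tanh$ at $z=\mathrm{i}\pi(k+\tfrac12)$, $k\in\Z$ (the point $z=0$ being removable), so the radius of convergence of the Taylor series of $m$ at any $x_0\in\R$ equals $\sqrt{x_0^2+\pi^2/4}\geq \pi/2$; this gives (A1*), with $k=2/\pi$. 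Finally $m$ is even with $\max_\R m=m(0)=1$, so (A2) holds exactly because $c>1$.

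For (ii): on the imaginary axis $m(\mathrm{i}\eta)=\tan(\eta)/\eta$ increases from $1$ at $\eta=0$ to $+\infty$ as $\eta\to(\pi/2)^-$, so there is a unique $\delta_c\in(0,\pi/2)$ with $\tan(\delta_c)/\delta_c=c^2$. The crucial point -- and the step I expect to be the main obstacle -- is that this $\delta_c$ is really the smallest imaginary part among all complex solutions of $m(z)=c^2$, i.e.\ off-axis solutions closer to $\R$ must be excluded. I would do this via the partial-fraction expansion
\begin{equation*}
\frac{\tanh(z)}{z}=\sum_{k=0}^\infty\frac{8}{\pi^2(2k+1)^2+4z^2},\qquad\text{equivalently}\qquad m(\sqrt s)=\int_0^\infty\rho(t)\mathrm{e}^{-st}\dt,
\end{equation*}
where $\rho(t)=2\sum_{k\geq0}\mathrm{e}^{-\pi^2(2k+1)^2t/4}>0$ and $\int_0^\infty\rho=1=m(0)$: this yields $|m(\sqrt s)|\leq m(\sqrt{\mathrm{Re}\,s})$ and, if $\mathrm{Im}\,s\neq0$, $\mathrm{Re}\,m(\sqrt s)<m(\sqrt{\mathrm{Re}\,s})$ for $\mathrm{Re}\,s>-\pi^2/4$; combining this with the strict monotonicity of $r\mapsto m(\sqrt r)$ on $(-\pi^2/4,\infty)$ and writing $s=z_0^2$ shows that any zero $z_0$ of $c^2-m$ other than $\pm\mathrm{i}\delta_c$ has $|\mathrm{Im}\,z_0|>\delta_c$. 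A direct computation then gives $m'(\mathrm{i}\delta_c)=-\mathrm{i}\,\bigl(\delta_c\sec^2(\delta_c)-\tan(\delta_c)\bigr)/\delta_c^2$, which is nonzero since $\delta_c-\tfrac12\sin(2\delta_c)>0$; hence the integer $n$ of Lemma~\ref{lem: exponential decay kernel L2} equals $0$, $P_0$ is the constant $\sqrt{2\pi}\,a_1$, and substituting $m'(\mathrm{i}\delta_c)$ together with $c^2=\tan(\delta_c)/\delta_c$ into \eqref{eq: Q} produces exactly the stated constant. The asserted properties of $v$ are those of Lemma~\ref{lem: exponential decay kernel L2} with $m_0=-1$, namely $v\in L^p(\{|x|\geq1\})$ for all $p$ and $v(x)\simeq|\ln|x||$ for $|x|\ll1$, with evenness inherited from $H_c$.

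For (iii) and (iv): the nonlinearity $G(u)=\tfrac{u^2}{2}(3c-u)$ is bounded on compact sets with $|G(u)|\lesssim|u|^2$ near $0$, i.e.\ (A3) with $r=2$, and \eqref{eq: main eq} is \eqref{eq: main eq general} with $c$ replaced by $c^2$ and the present $H_c$; since $n=0$, Theorem~\ref{thm: decay}(ii) applies verbatim and yields $\mathrm{e}^{\delta_c|\cdot|}u\in L^\infty(\R)$ but $\mathrm{e}^{\delta_c|\cdot|}u\notin L^p(\R)$ for $p<\infty$ (non-triviality of $u$ ensures $G(u)$ does not vanish a.e., as needed for the lower bound). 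For symmetry, the third bullet of Remark~\ref{rem: assumptions} applies, since $m(0)=1>0$ and $m(\sqrt\cdot)=\sum_{k\geq0}2\bigl(\pi^2(2k+1)^2/4+\cdot\bigr)^{-1}$ is completely monotone on $(0,\infty)$; hence $H_c$ is symmetric and decreasing on $(0,\infty)$, is non-negative (being decreasing with $H_c(x)\to0$), and lies in $L^1(\R)$ by Corollary~\ref{cor: integrability}. On the range $[0,c-\tfrac13]\subset[0,2c]$ the function $G$ is non-negative and increasing ($G'(u)=3u(c-\tfrac u2)\geq0$), and for $x,y$ in this range
\begin{equation*}
\frac{|G(x)-G(y)|}{|x-y|}=\frac{3c}{2}(x+y)-\frac12(x^2+xy+y^2)\leq\frac{3c}{2}(x+y)-\frac12(x^2+y^2)=\frac{G(x)}{x}+\frac{G(y)}{y},
\end{equation*}
the difference of the two sides being $\tfrac12 xy\geq0$; by the second bullet of Remark~\ref{rem: assumptions} the Lipschitz hypothesis in Theorem~\ref{thm: symmetry} is therefore unnecessary, and together with continuity of the solitary wave the remaining hypotheses hold, so $u$ is symmetric with a single crest. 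Apart from the off-axis exclusion in step (ii), every step is either a short computation or a direct appeal to the theorems already proved.
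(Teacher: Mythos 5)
Your proposal is correct and follows the same overall four-step reduction to the general theory as the paper (verify (A1), (A1*), (A2) with $m_0=-1$ and speed $c^2$; locate the pole and apply Lemma \ref{lem: exponential decay kernel L2}; quote Theorem \ref{thm: decay}(ii); quote Theorem \ref{thm: symmetry}), but two sub-steps are handled by genuinely different arguments. For the identification of $\delta_c$ as the \emph{minimal} imaginary part among all complex roots of $m(z)=c^2$, the paper simply asserts that, $m$ being even and monotonically decreasing on $(0,\infty)$, its continuation is real-valued only on the real and imaginary axes; your Laplace representation $m(\sqrt{s})=\int_0^\infty \rho(t)\mathrm{e}^{-st}\dt$ with $\rho>0$, which yields $\mathrm{Re}\, m(\sqrt{s})<m(\sqrt{\mathrm{Re}\,s})$ off the real $s$-axis and hence $|\mathrm{Im}\,z_0|\geq\delta_c$ with equality only at $\pm i\delta_c$, is a more explicit and essentially self-contained justification of the same exclusion, at the cost of invoking the Mittag--Leffler expansion of $\tanh$. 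For symmetry, the paper verifies the Lipschitz hypothesis of Theorem \ref{thm: symmetry} directly by checking $G'(x)<c^2$ on $[0,c-\tfrac{1}{\sqrt{3}})$ (which, incidentally, does not quite cover the stated range $[0,c-\tfrac13]$), whereas you verify $|G(x)-G(y)|/|x-y|\leq G(x)/x+G(y)/y$ on $[0,2c]$ and invoke the second bullet of Remark \ref{rem: assumptions}; your route dispenses with the Lipschitz constant altogether and is the cleaner one for this cubic nonlinearity. The remaining steps --- $n=0$ from $m'(i\delta_c)=-i\bigl(\delta_c\sec^2(\delta_c)-\tan(\delta_c)\bigr)/\delta_c^2\neq0$, the evaluation of the constant via \eqref{eq: Q} and \eqref{eq: expression for H_c}, and the sharp decay statement --- coincide with the paper's.
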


\begin{proof}
	It is straightforward to see that $m$ satisfies (A1) and (A2) with $m_0=-1$, so all results in Section \ref{sec: H_c} hold in the present case; in particular $H_c(x)\simeq |\ln(|x|)|$ for $x$ near $0$ (cf. Lemma \ref{lem: behaviour at 0}). Moreover, $m(z)$ is analytic for all $z\in \C$ except $z\in i\frac{\pi}{2} \Z\setminus \lbrace 0\rbrace$ and as $m$ is even and monotonically decreasing on $(0,\infty)$, it is real-valued on, and only on, the real and the imaginary axis. Along the imaginary axis,
	\begin{equation*}
	m(iy)=\frac{\tan(y)}{y},
	\end{equation*}
	which is even in $y$ and a bijection from $[0,\frac{\pi}{2})$ to $[1,\infty)$. Hence, for all $c>1$, the equation $\frac{\tan(y)}{y}=c^2$ has one solution in $(0,\frac{\pi}{2})$, which we denote by $\delta_c$. By Lemma \ref{lem: exponential decay kernel L2} we get that 
	\begin{equation*}
		H_c(x)=\mathrm{e}^{-\delta_c|x|}\left(v(x) +C\right),
	\end{equation*}
	with $v$ as in the statement and some $C$. As the singularities are at $\pm i \delta_c$, we can use \eqref{eq: expression for H_c} to calculate $C$ explicitly in terms of $c$ (recall that $\frac{\tan(\delta_c)}{\delta_c}=c^2$) and we get the expression in the statement.
	
	With the expression for $H_c$, the decay of $u$ follows directly from Theorem \ref{thm: decay}. It remains only to show symmetry. It is straightforward to check that the function $G(x)=\frac{x^2}{2}(3c-x)$ is increasing on $[0,2c]$ and $G'(x)<c^2$ on $[0,c-\frac{1}{\sqrt{3}})$, and that $H_c$ satisfies the assumptions in Theorem \ref{thm: symmetry} (cf. Remark \ref{rem: assumptions}). The symmetry then follows from Theorem \ref{thm: symmetry}.
\end{proof}

\subsection*{The Whitham equation}

Let us now turn to the Whitham equation
\begin{equation}
\label{eq: whitham general}
	u_t+2uu_x+Lu_x=0,
\end{equation}
where, $m(\xi)=\sqrt{\frac{\tanh(\xi)}{\xi}}$. In this case solitary wave solutions will satisfy the equation
\begin{equation}
\label{eq: whitham}
u(c-u)=H_c \ast u^2.
\end{equation}
Clearly $m$ satisfies (A1) and (A2) with $m_0=-1/2$.

In \cite{Bruell2017sad} they prove that for $c>1$
\begin{equation*}
	\mathrm{e}^{\delta |\cdot|} (\cdot)H_c(\cdot)\in L^2(\R), \,\, \text{for any} \,\, \delta\in (0, \delta_c),
\end{equation*}
where $\delta_c\in (0, \frac{\pi}{2})$ satisfies $\sqrt{\frac{\tan(\delta_c)}{\delta_c}}=c$, without showing whether or not this is optimal. Moreover, they prove that solitary waves satisfy
\begin{equation*}
	\mathrm{e}^{\eta |\cdot|}u(\cdot)\in L^1(\R)\cap L^\infty(\R), \,\, \text{for some} \,\, \eta\geq \delta.
\end{equation*}
With our results from Sections \ref{sec: H_c} and \ref{sec: decay}, we can improve upon these results by giving the precise rate of decay both for the kernel $H_c$ and for a solitary-wave solution $u$:
\begin{theorem}
	\label{thm: decay whitham}
	Let $c>1$, $m(\xi)=\sqrt{\frac{\tanh(\xi)}{\xi}}$, and $\delta_c\in (0, \frac{\pi}{2})$ satisfy $\sqrt{\frac{\tan(\delta_c)}{\delta_c}}=c$. Then
	\begin{equation*}
		H_c(x)=\mathrm{e}^{-\delta_c|x|}\left(v(x)+\sqrt{2\pi}\frac{2\tan(\delta_c)\delta_c}{\delta_c\sec^2(\delta_c)-\tan(\delta_c)}\right),
	\end{equation*}
	for some even function $v\in L^p(\lbrace x\in \R: |x|\geq 1\rbrace)$ for all $1\leq p\leq \infty$ that satisfies $v(x)\simeq |x|^{-1/2}$ for $|x|<1$.
	
	Moreover, if $u\in L^\infty(\R)$ with $\lim_{|x|\rightarrow \infty} u(x)=0$ is a non-trivial solution to \eqref{eq: whitham}, then
	\begin{equation*}
		\mathrm{e}^{\delta_c|\cdot|}u(\cdot)\in L^p(\R) \,\, \text{if and only if} \,\, p=\infty.
	\end{equation*}
\end{theorem}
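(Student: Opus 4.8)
The plan is to specialize the general machinery of Sections~\ref{sec: H_c} and~\ref{sec: decay} to the symbol $m(\xi)=\sqrt{\tanh(\xi)/\xi}$ and verify that its complex-analytic continuation has exactly the structure needed to pin down $\delta_c$ and the leading coefficient. First I would record that $m$ satisfies (A1) and (A2) with $m_0=-1/2$, which is already noted in the excerpt preceding the theorem, and observe that $m$ satisfies (A1*): since $\tanh(\xi)/\xi$ extends to a meromorphic function with poles only at $\xi\in i\frac\pi2(2\mathbb Z+1)$, the square root is analytic off the imaginary axis with branch points at those poles, and a uniform lower bound on the local radius of convergence away from the strip $|\mathrm{Im}\,z|<\frac\pi2$ holds; alternatively one checks the ratio condition in (A1*) directly from the known growth of the Taylor coefficients of $\sqrt{\tanh}$. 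This places us squarely in the hypotheses of Lemma~\ref{lem: exponential decay kernel L2}, Corollary~\ref{cor: Lp kernel}, and Theorem~\ref{thm: decay}.

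Next I would locate $\delta_c$. Along the imaginary axis, $m(iy)=\sqrt{\tan(y)/y}$, which is even in $y$ and maps $[0,\frac\pi2)$ bijectively onto $[1,\infty)$; hence for $c>1$ the equation $m(iy)=c$, i.e. $\sqrt{\tan(\delta_c)/\delta_c}=c$, has a unique solution $\delta_c\in(0,\frac\pi2)$, and since $m$ is real only on the real and imaginary axes and monotone there, this is genuinely the smallest $\delta$ with a point $z_0$, $\mathrm{Im}\,z_0=\delta$, solving $m(z_0)=c$; so $z_0=\pm i\delta_c$. One must check that $m'(z_0)\neq 0$: computing $\frac{d}{dy}\sqrt{\tan y/y}$ at $y=\delta_c$ gives a nonzero value because $\tan y/y$ is strictly increasing on $(0,\frac\pi2)$, so $n=0$ in Lemma~\ref{lem: exponential decay kernel L2} and $P_n$ is a nonzero constant $C$. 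Then the explicit formula~\eqref{eq: expression for H_c} together with~\eqref{eq: Q} gives $C$ in terms of the residue data: writing $g=cm'/(c-m)^2$ and extracting the coefficient $a_1=ic/m'(z_0)$ of $Q_1$, and using $P_0 = \sqrt{2\pi}\,a_1/i$, one obtains $C=\sqrt{2\pi}\cdot c/m'(i\delta_c)$ up to the manipulations in the lemma; substituting $c^2=\tan(\delta_c)/\delta_c$ and $m'(i\delta_c)$ computed from $\frac{d}{dy}\sqrt{\tan y/y}=\frac{y\sec^2 y-\tan y}{2y^2\sqrt{\tan y/y}}$ and simplifying yields $C=\sqrt{2\pi}\,\frac{2\tan(\delta_c)\delta_c}{\delta_c\sec^2(\delta_c)-\tan(\delta_c)}$. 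The behaviour $v(x)\simeq|x|^{-1/2}$ for $|x|<1$ is exactly Lemma~\ref{lem: behaviour at 0} with $m_0=-1/2$.

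For the decay of the solitary wave $u$: $G(u)=u^2$ obviously satisfies (A3) with $r=2$, so Theorem~\ref{thm: decay}(ii) applies verbatim. Since $n=0$, it gives $\mathrm e^{\delta_c|\cdot|}u(\cdot)\in L^\infty(\mathbb R)$ and, because $G(u)=u^2$ is not identically zero on a set of positive measure (as $u$ is non-trivial), the non-cancellation argument at the end of the proof of Theorem~\ref{thm: decay}(ii) shows $\mathrm e^{\delta_c|\cdot|}u(\cdot)\notin L^p(\mathbb R)$ for $p<\infty$; hence membership holds iff $p=\infty$.

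I expect the only genuine obstacle to be the explicit residue computation giving the constant $C$: one has to carry the normalization constants of $\F^{-1}$ through~\eqref{eq: residue at z_0 1}--\eqref{eq: expression for H_c} correctly, simplify $g$ near $z_0=i\delta_c$ to read off $a_1$, and then use the constraint $c^2=\tan(\delta_c)/\delta_c$ to put the answer in the stated closed form. Everything else---verifying (A1), (A1*), (A2), the uniqueness and location of $\delta_c$, and the non-vanishing of $m'(i\delta_c)$---is routine, and the decay statement for $u$ is an immediate citation of Theorem~\ref{thm: decay}.
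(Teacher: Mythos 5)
Your proposal follows essentially the same route as the paper's proof: verify (A1)--(A2) with $m_0=-\tfrac12$, locate the unique singularity of $g=cm'/(c-m)^2$ at $\pm i\delta_c$ via the bijection $y\mapsto\sqrt{\tan(y)/y}$ on $[0,\tfrac\pi2)$, apply Lemma~\ref{lem: exponential decay kernel L2} with $n=0$ and the formula \eqref{eq: expression for H_c} to extract the constant, and then cite Theorem~\ref{thm: decay} with $G(u)=u^2$, $r=2$. The only caveat is the bookkeeping of the factor of $i$ in passing from $a_1=ic/m'(i\delta_c)$ to the real constant $C$ (recall $m'(i\delta_c)$ is purely imaginary), but you flag this yourself and land on the correct closed form, so the argument matches the paper's.
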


\begin{proof}
	As noted above, $m$ satisfies (A1) and (A2) with $m_0=-\frac{1}{2}$, so all results in Section \ref{sec: H_c} hold in the present case; in particular $H_c(x)\simeq |x|^{-1/2}$ for $x$ near $0$ (cf. Lemma \ref{lem: behaviour at 0}). Moreover, $m(z)^2$ is analytic for all $z\in \C$ except $z\in i\frac{\pi}{2} \Z\setminus \lbrace 0\rbrace$. Hence $m(z)$ is analytic in the strip $|\mathrm{Im}\, z|<\frac{\pi}{2}$. Moreover, as $m$ is even and, clearly, monotonically decreasing on $(0,\infty)$, it is real-valued on, and only on, the real and the imaginary axis. Along the imaginary axis,
	\begin{equation*}
	m(iy)=\sqrt{\frac{\tan(y)}{y}},
	\end{equation*}
	which is even in $y$ and a bijection from $[0,\frac{\pi}{2})$ to $[1,\infty)$. Hence, for all $c>1$, the equation $\sqrt{\frac{\tan(y)}{y}}=c$ has one solution in $(0,\frac{\pi}{2})$, which we denote by $\delta_c$, and $g$ has two singularities within the strip $|\mathrm{Im}\, z|<\frac{\pi}{2}$, namely at $\pm i\delta_c$. It follows from Lemma \ref{lem: exponential decay kernel L2}
	\begin{equation*}
	H_c(x)=\mathrm{e}^{-\delta_c|x|}\left(v(x)+C\right),
	\end{equation*}
	for $v$ as in the statement and some $C$. However, as our singularities are at $\pm i \delta_c$, we can use \eqref{eq: expression for H_c} to calculate
	\begin{equation*}
	C=-i\sqrt{2\pi}\frac{c}{m'(i\delta_c)}=\sqrt{2\pi}\frac{2\tan(\delta_c)\delta_c}{\delta_c\sec^2(\delta_c)-\tan(\delta_c)},
	\end{equation*}
	where we used that $c=\sqrt{\frac{\tan(\delta_c)}{\delta_c}}$. This proves the first part.
	
	For the second part, note that $G(u)=u^2$ satisfies (A3) with $r=2$. Having proved the first part, the second part now follows by Theorem \ref{thm: decay}.
\end{proof}


\subsection*{The Capillary Whitham equation}
The examples above were with very weak dispersion, but as shown in Section \ref{subsec: diff operator} the theory can also be applied to equations with stronger dispersion. We take the Capillary Whitham equation as an example. That is, we consider \eqref{eq: whitham general}, now with
\begin{equation}
	\label{eq: capillary whitham kernel}
	m(\xi)=\sqrt{\frac{(1+\beta \xi^2)\tanh(\xi)}{\xi}},
\end{equation}
where $\beta>0$, called the Bond number, is the strength of the surface tension. In this case we have that all sub-critical solitary wave solutions are exponentially decaying:
\begin{theorem}
	Let $\beta>0$ and $m$ be defined by \eqref{eq: capillary whitham kernel}, and let $0<c<\min_{\xi\in\R} m(\xi)$. Denoting by $\delta_{1/c}>0$ the smallest positive number for which there exists a $z_0\in \C$ with $\mathrm{Im} \, z_0=\delta_{1/c}$ such that $m(z_0)=c$, we have that if $u\in L^\infty(\R)$ with $\lim_{|x|\rightarrow  \infty} u(x)=0$ is a non-trivial solution to \eqref{eq: whitham}, then
	\begin{equation*}
		\mathrm{e}^{\delta_{1/c}|\cdot|}u(\cdot)\in L^p(\R) \,\, \text{if and only if} \,\, p=\infty.
	\end{equation*}
\end{theorem}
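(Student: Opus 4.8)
The plan is to deduce this from Theorem \ref{thm: diff operator}, since the capillary symbol $m$ corresponds to a differentiating rather than a smoothing operator. Write $m = \phi^{1/2}$ with $\phi(\xi) = (1+\beta\xi^2)\tanh(\xi)/\xi$. The first step is to record the elementary structural facts: $\phi$ is even, real-analytic and strictly positive on $\R$ (since $\tanh(\xi)/\xi > 0$ and $\phi(0) = 1$), with $\phi(\xi) \to \infty$ as $|\xi|\to\infty$; hence $m$ is even, strictly positive and $m(\xi)\to\infty$, which is exactly the setting of Section \ref{subsec: diff operator}. The solitary-wave equation for \eqref{eq: whitham general} with this symbol is $cu - L(u) = u^2$, i.e.\ \eqref{eq: main} with $G(u) = u^2$, which satisfies (A3) with $r = 2$; by the reduction of Section \ref{subsec: diff operator} it is equivalent to $cu = -H_{1/c}\ast G(u)$ with $\tilde m := m^{-1}$ in the role of $m$ and $1/c$ in the role of $c$. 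The hypothesis $0 < c < \min_\xi m(\xi)$ is precisely the remaining hypothesis of Theorem \ref{thm: diff operator} (equivalently, assumption (A2) for the pair $(\tilde m, 1/c)$, since $\max_\xi \tilde m(\xi) = (\min_\xi m(\xi))^{-1} < 1/c$). So everything comes down to verifying (A1) and (A1*) for $\tilde m$.

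For (A1) I would show that $\phi$ behaves like a symbol of order one: $\phi(\xi) \simeq 1+|\xi|$ (as $\phi$ is continuous, positive, and $\sim \beta|\xi|$ at infinity) and $|\phi^{(n)}(\xi)| \lesssim (1+|\xi|)^{1-n}$ for all $n$ — here $1+\beta\xi^2$ is a polynomial of degree $2$, while $\tanh(\xi)/\xi$ is a symbol of order $-1$ because, apart from contributions decaying exponentially in $|\xi|$, its $n$-th derivative is governed by the $\xi^{-1}$ behaviour. Stability of elliptic symbol classes under powers then gives that $\tilde m = \phi^{-1/2}$ is a symbol of order $-1/2$, that is, (A1) holds with $m_0$ replaced by $-1/2$. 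For (A1*) I would pass to the complex plane: $\phi(z) = (1+\beta z^2)\tanh(z)/z$ is holomorphic in $\{|\mathrm{Im}\, z| < \pi/2\}$ (the singularity at $z = 0$ is removable and the nearest poles of $\tanh$ sit at $\pm i\pi/2$), and its only zeros there are those of $1+\beta z^2$ at $\pm i\beta^{-1/2}$. Thus $\phi$ is holomorphic and zero-free on the strip $\{|\mathrm{Im}\, z| < \rho_0\}$ with $\rho_0 := \min\{\pi/2, \beta^{-1/2}\} > 0$, so $\tilde m = \phi^{-1/2}$ (the branch positive on $\R$) extends holomorphically there. This gives a uniform positive lower bound $\rho_0$ on the radius of convergence of the Taylor expansion of $\tilde m$ about any real point, and since all singularities of $\tilde m$ lie on the imaginary axis this radius tends to $\infty$ as the base point does — which is exactly what (A1*) encodes, with $k = 1/\rho_0$. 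With (A1), (A1*) for $\tilde m$ and (A3) for $G$ in hand, Theorem \ref{thm: diff operator} applies and yields $\mathrm{e}^{\delta_{1/c}|\cdot|}u(\cdot) \in L^p(\R)$ if and only if $p = \infty$, with $\delta_{1/c}$ the smallest $\mathrm{Im}\, z_0$ over complex solutions of $m(z_0) = c$.

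The step I expect to be the main obstacle is the branch structure of $m$: because $m = \phi^{1/2}$ is a genuine square root, the symbol $\tilde m = \phi^{-1/2}$ fed into Lemma \ref{lem: exponential decay kernel L2} really does have branch points — at the zeros of $1+\beta z^2$, at the poles $\pm i\pi/2$ of $\tanh$, and, after continuing past the latter, at the zeros $\pm i\pi$ of $\tanh$ — all of them on the imaginary axis and symmetric about both axes. The delicate point is therefore to invoke the branch-cut cancellation in the proof of Lemma \ref{lem: exponential decay kernel L2}: placing the cuts along the imaginary axis, evenness gives $\overline{\tilde m(z)} = \tilde m(\bar z) = \tilde m(-\bar z)$, so $\tilde m'$ — and hence the integrand $\tilde g = (1/c)\tilde m'/(1/c - \tilde m)^2$ of that proof — takes equal values on the two sides of each cut, and the contour contributions cancel; this is what lets the Paley--Wiener/residue computation go through unchanged. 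A secondary point worth noting, in contrast with the gravity- and bidirectional-Whitham examples, is that here $c < \min_\xi m(\xi)$ in general forces the first complex solution of $m(z) = c$ off the imaginary axis (on the imaginary axis $m(iy)^2 = (1-\beta y^2)\tan(y)/y$ need not descend below $c^2$), so one should not expect a closed-form equation for $\delta_{1/c}$; its mere existence, however, is ensured because $\tilde m = \phi^{-1/2}$ blows up near the zeros of $\phi$ and so must attain the value $1/c$, and beyond that it is already built into the framework of Lemma \ref{lem: exponential decay kernel L2}.
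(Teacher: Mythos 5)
Your proposal is correct and follows essentially the same route as the paper: the paper likewise observes that $\tilde m = m^{-1}$ satisfies (A1), (A1*) and (A2) (with $1/c$ in place of $c$) and that $G(u)=u^2$ satisfies (A3), and then invokes Theorem \ref{thm: diff operator}; your write-up merely fills in the symbol-class and holomorphic-extension verifications and anticipates the same caveats (branch cuts on the imaginary axis, $z_0$ possibly off the imaginary axis for small $\beta$) that the paper records in its discussion after the theorem.
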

Noting that (A3) is clearly satisfied and for all $\beta>0$ and $0<c<\min_{\xi\in\R} m(\xi)$,
\begin{equation*}
	\tilde{m}(\xi):=\frac{1}{m(\xi)}=\sqrt{\frac{\xi}{(1+\beta\xi^2)\tanh(\xi)}}
\end{equation*}
satisfies (A1), (A1*) and (A2) (with $\frac{1}{c}$ in place of $c$), the result follows directly from Theorem \ref{thm: diff operator}. However, it is still of interest to investigate some of the dynamics. Let $z_0\in C$ with $\mathrm{Im}\, z_0= \delta_{1/c}$ be such that $\tilde{m}=\frac{1}{c}$. We observe that $\tilde{m}$ is analytic in
\begin{equation*}
	\C\setminus \lbrace iy : y\in \R\setminus \lbrace 0\rbrace, \,\, \sign(y)(1-\beta y^2)\tan(y)\leq 0\rbrace,
\end{equation*}
and the intervals cut out from the imaginary axis are branch cuts. In particular it is analytic in the strip $|\mathrm{Im} \, z|< \min(\sqrt{\beta^{-1}},\frac{\pi}{2})$. If $\beta>\frac{4}{\pi^2}$, then
\begin{equation*}
	\tilde{m}(iy)=\sqrt{\frac{y}{(1-\beta y^2)\tan(y)}}\colon [0,\beta^{-1/2}) \rightarrow [1,\infty),
\end{equation*}
is a bijection; in particular, $z_0$ lies on the imaginary axis within the strip where $\tilde{m}$ is analytic. If $\beta<\frac{4}{\pi}$, however, then the point $z_0$ does not lie within the strip, and not necessarily even on the imaginary axis (if $z_0$ is not purely imaginary, then $\tilde{m}(-\overline{z_0})=\frac{1}{c}$ as well).

\medskip
\noindent

\cleardoublepage
\bibliographystyle{plain}
\bibliography{decay.bib}

\end{document}